\documentclass[journal]{IEEEtran} 
\IEEEoverridecommandlockouts                              


\pdfminorversion=4

\usepackage{algorithmic,algorithm}
\usepackage{epsfig}
\usepackage{array}
\usepackage{amsmath}
\usepackage{amssymb}
\usepackage{amsthm}
\usepackage{bm}
\usepackage{cite}
\usepackage{color}
\usepackage{balance}
\newtheorem{thm}{Theorem}[section]
\newtheorem{lem}[thm]{Lemma}

\newtheorem{ass}{Assumption}

\newtheorem{prop}[thm]{Proposition}

\newcommand{\argmin}[1]{\underset{#1}{\arg \min}}

\newcommand{\xalgm}{\mathcal{X}_{\rm Alg}}
\newcommand{\xalg}{$\xalgm$ }
\newcommand{\zalgm}{\mathcal{Z}_{\rm Alg}}
\newcommand{\zalg}{$\zalgm$ }

\usepackage{makecell}

\usepackage{xcolor,calc}


\title{Learning continuous $Q$-functions using generalized Benders cuts}
\author{}
\author{Joseph Warrington,~\IEEEmembership{Member,~IEEE}
\thanks{The author is with the Automatic Control Laboratory, Swiss Federal Institute of Technology (ETH) Zurich,
Physikstrasse 3, 8092 Zurich, Switzerland. Contact: {\tt\small warrington@control.ee.ethz.ch}}}

\begin{document}
\maketitle

\thispagestyle{empty}
\begin{abstract}
$\bm{Q}$-functions are widely used in discrete-time learning and control to model future costs arising from a given control policy, when the initial state and input are given. Although some of their properties are understood, $\bm{Q}$-functions generating optimal policies for continuous problems are usually hard to compute. Even when a system model is available, optimal control is generally difficult to achieve except in rare cases where an analytical solution happens to exist, or an explicit exact solution can be computed. It is typically necessary to discretize the state and action spaces, or parameterize the $\bm{Q}$-function with a basis that can be hard to select \textit{a priori}. This paper describes a model-based algorithm based on generalized Benders theory that yields ever-tighter outer-approximations of the optimal $\bm{Q}$-function. Under a strong duality assumption, we prove that the algorithm yields an arbitrarily small Bellman optimality error at any finite number of arbitrary points in the state-input space, in finite iterations. Under additional assumptions, the same guarantee holds when the inputs are determined online by the algorithm's updating $\bm{Q}$-function. We demonstrate these properties numerically on scalar and 8-dimensional systems.
\end{abstract}
\vspace{-0.25cm}


\section{Introduction}

Reinforcement learning (RL) and approximate dynamic programming (ADP) commonly employ so-called $Q$-functions to model the costs incurred in the future evolution of a discrete-time system under a given control policy. The $Q$-function associated with control policy $u = \pi(x)$ takes a state $\hat{x}$ and input $\hat{u}$ as parameters, and is equal to the stage costs incurred immediately for $(\hat{x}, \hat{u})$ plus the costs (typically infinite-horizon with a discount factor) of following policy $\pi$ thereafter.

$Q$-functions are widely associated with RL (i.e., model-free learning), thanks to work stemming from Watkins' $Q$-learning algorithm \cite{watkins_learning_1989}. However, model-free $Q$-learning suffers from slow convergence, even despite new insights into optimizing the rate \cite{devraj_zap_2017}. New work such as \cite{mania_simple_2018} is an example of interest in cases where model data, known or itself learned, can improve learning performance for difficult control problems. The present paper is motivated by a desire to learn an approximate $Q$-function to control a system with a known model.

Mathematically, $Q$-functions have much in common with \emph{value} (or $V$-) functions, the chief difference being that they are defined on state-input space rather than on the state alone. Although they are generally more expensive to store, their higher-dimensional domain often makes approximate, finitely-parameterized $Q$-functions more expressive than $V$-functions \cite[Ch.~2]{busoniu_reinforcement_2010}.

It is common to discretize continuous problems in order to obtain a finite parameterization of the $V$- or $Q$-function \cite{chow_optimal_1991}. However, performing even one iteration of the canonical algorithms, such as value iteration, then has an undesirable exponential cost. ADP methods have arisen to find more tractable parameterizations of the continuous $V$-function. Several are based on continuous extensions of the ``linear programming approach'' to ADP \cite{de_farias_linear_2003}, in which a valid lower bound on the optimal value function is maximized. Examples include the quadratic lower bound in \cite{wang_performance_2009}, and the polynomial derived using sum-of-squares techniques in \cite{summers_approximate_2013}. Approximate $V$-functions represented as the pointwise maximum of multiple lower-bounding functions have been used in \cite{lincoln_relaxing_2006, wang_approximate_2015, beuchat_point-wise_2017, hohmann_moment_2018}. Recent work utilizing a point-wise maximum representation \cite{warrington_generalized_2017} has extended the Benders decomposition argument used for linear multi-stage decision problems in Dual DP (DDP, \cite{pereira_multi-stage_1991}), to a general nonlinear, infinite-horizon setting.

In this paper we adapt the Benders approach from \cite{warrington_generalized_2017} to learn $Q$-functions. We define an algorithm that successively produces tighter approximations of a problem's optimal $Q$-function from below, and prove convergence results for off-policy and policy-driven learning of the $Q$-function in this manner. In the former case, $(x,u)$ pairs are pre-selected at the start of the algorithm, whereas in the latter case, only the $x$ points are pre-selected, and the $u$ decisions are made according to a policy from the update $Q$-function estimate. We then demonstrate the method's efficacy for test systems.

Section \ref{sec:ProblemStatement} describes the infinite horizon problem, Section \ref{sec:Benders} describes the Benders decomposition approach, and Section \ref{sec:Algorithm} proposes an algorithm and proves its key properties. Section \ref{sec:Numerical} presents numerical examples, and Section \ref{sec:Conclusion} concludes.


\section{Problem statement} \label{sec:ProblemStatement}

\subsection{Infinite-horizon control problem} \label{sec:IHProblem}

The scope considered is the class of infinite-horizon, discrete-time, deterministic optimal control problems with time-invariant stage cost functions, dynamics, and constraints:
\begin{subequations} \label{eq:IHProblem}
\begin{align}
V^\star(x) := \inf_{u_0,u_1,\ldots} \quad & \sum_{t=0}^{\infty} \gamma^t \ell(x_t, u_t) \label{eq:IHObj}\\
\text{s.~t.} \quad & x_{t+1} = f(x_t, u_t), \quad t = 0,1,\ldots \, , \\
& h(x_t, u_t) \leq 0, \quad t = 0,1,\ldots \, , \label{eq:IHSIC} \\
& x_0 = x \, .
\end{align}
\end{subequations}
For each time step $t$ we denote the state $x_t \in \mathcal{X} \subseteq \mathbb{R}^{n_x}$, and the action, or input, $u_t \in \mathcal{U} \subseteq \mathbb{R}^{n_u}$. Sets $\mathcal{X}$ and $\mathcal{U}$ are the state and action spaces, and are continuous. Future costs are discounted according to a discount factor $\gamma \in \left(0, 1\right]$, the (non-negative) stage cost function is $\ell : \mathcal{X} \times \mathcal{U} \rightarrow \mathbb{R}_+$, and the dynamics are governed by the mapping $f : \mathcal{X} \times \mathcal{U} \rightarrow \mathcal{X}$. There are $n_c$ state-input constraints \eqref{eq:IHSIC}, parameterized by a vector-valued mapping $h : \mathcal{X} \times \mathcal{U} \rightarrow \mathbb{R}^{n_c}$.  The parametric infimum $V^\star(x)$ of problem \eqref{eq:IHProblem} is referred to as the \emph{optimal value function} (or optimal $V$-function) of the problem.

\subsection{$Q$-functions}

We now define $Q$-functions and briefly state some of their well-known properties for later use. For more detail, see for example \cite[Chapter 2]{busoniu_reinforcement_2010}. Given a policy $\pi : \mathcal{X} \rightarrow \mathcal{U}$, its associated $Q$-function, $Q^\pi : \mathcal{X} \times \mathcal{U} \rightarrow \mathbb{R} \cup \{+\infty\}$, is
\begin{equation} \label{eq:Qdef}
Q^\pi(x,u) = \ell(x,u) + \sum_{t=1}^\infty \gamma^t \ell(x_t, \pi(x_t)) \, ,
\end{equation}
in which the relation $x_{t+1} = f(x_t,\pi(x_t))$ holds for $t \geq 1$, and $x_1 = f(x,u)$. The $Q$-function is the sum of the stage cost incurred for some initial state and input $x$ and $u$, and the infinite sum of (discounted) costs under policy $\pi$ thereafter.

The optimal $Q$-function, which we denote $Q^\star$, minimizes \eqref{eq:Qdef} over policies $\pi$, and satisfies
\begin{equation}\label{eq:optimalQ}
Q^\star(x,u) = \ell(x,u) + \inf_{u' \in \mathcal{U}(f(x,u))} Q^\star(f(x,u), u')
\end{equation}
for all $(x,u) \in \mathcal{X} \times \mathcal{U}$. In \eqref{eq:optimalQ} we use the notation $\mathcal{U}(x) := \{u \in \mathcal{U} \, : \, h(x,u) \leq 0 \}$. An associated Bellman operator for $Q$-functions, $\mathcal{T_Q}$, can be defined as 
\begin{equation} \label{eq:TQdef}
\mathcal{T_Q}Q(x,u) := \ell(x,u) + \inf_{u' \in \mathcal{U}(f(x,u))} Q(f(x,u), u') \, .
\end{equation}
On the left-hand side, $\mathcal{T_Q}Q$ is to be interpreted as a new function with the same domain as $Q$, and evaluated at $(x,u)$. Thus, condition \eqref{eq:optimalQ} can be written $\mathcal{T_Q}Q^\star(x,u) = Q^\star(x,u)$ for all $(x,u) \in \mathcal{X} \times \mathcal{U}$. If an optimal $Q$- and $V$-function exist for problem \eqref{eq:IHProblem}, they are related by 
$V^\star(x) = \inf_{u \in \mathcal{U}(x)} Q^\star(x,u)$, and thus from \eqref{eq:optimalQ}, $Q^\star(x,u) = \ell(x,u) + \gamma V^\star(f(x,u))$.

For any approximate $Q$-function for which the infimum in \eqref{eq:TQdef} is attained, one can define an associated control policy consistent with definition \eqref{eq:Qdef}:
\begin{equation}\label{eq:Qpolicy}
\pi(x; Q) \in \argmin{u \in \mathcal{U}(x)}\, Q(x,u) \, .
\end{equation}
The attraction of a $Q$-function is that in a wide range of cases it is simpler to solve \eqref{eq:Qpolicy} than it would be to solve, for the same $x$, the full infinite-horizon problem \eqref{eq:IHProblem}, or a finite-horizon truncation thereof, as in Model Predictive Control (MPC) \cite{borrelli_predictive_2017}. 

Lastly, for the benefit of developments in Section \ref{sec:Benders}, we note it is easy to show that the operator $\mathcal{T_Q}$ is monotonic:
\begin{align}
& Q_a(x,u) \leq Q_b(x,u) \quad \forall (x,u) \in \mathcal{X} \times \mathcal{U} \nonumber \\
& \hspace{0.6cm} \Rightarrow \mathcal{T_Q}Q_a(x,u) \leq \mathcal{T_Q}Q_b(x,u) \quad \forall (x,u) \in \mathcal{X} \times \mathcal{U} \, . \label{eq:QMonotonic}
\end{align}

\section{Benders cuts} \label{sec:Benders}

\subsection{Pointwise maximum representation}

Let $Q_I : \mathcal{X} \times \mathcal{U} \rightarrow \mathbb{R}$ be a function of the following ``pointwise maximum'' form,
\begin{equation} \label{eq:pwmax}
Q_I(x,u) = \max_{i=0,\ldots,I} \{ q_i(x,u) \} \, ,
\end{equation}
where $I$ is a non-negative integer, and each function $q_i : \mathcal{X} \times \mathcal{U} \rightarrow \mathbb{R}$ is known to satisfy 
\[q_i(x,u) \leq Q^\star(x,u) \, , \quad \forall(x,u) \in \mathcal{X} \times \mathcal{U}\, .\] 
Thus $\smash{Q_I(x,u) \leq Q^\star(x,u)}$ for all $\smash{(x,u) \in \mathcal{X} \times \mathcal{U}}$. From \eqref{eq:Qpolicy} the control policy associated with $Q_I$ is simply $\pi(x;Q_I) \in \arg\min_{u\in\mathcal{U}(x)} \max_{i=0,\ldots,I} \{q_i(x,u)\}$.

In Section \ref{sec:Algorithm} we will propose an algorithm that uses $Q_I$ to construct an additional function, or ``cut'' $q_{I+1}$. Under certain assumptions, the new cut satisfies
\begin{subequations}
\begin{align}
q_{I+1}(x,u) & \leq Q^\star(x,u) \, \,\, \forall (x,u) \in \mathcal{X} \times \mathcal{U} \, , \label{eq:underest} \\
\text{and } q_{I+1}(\hat{x},\hat{u}) & > Q_I(\hat{x},\hat{u}) \, \,\,  \text{for some }(\hat{x},\hat{u}) \in \mathcal{X} \times \mathcal{U}. \label{eq:improvement}
\end{align}
\end{subequations}
Thus the new function, $\smash{Q_{I+1}(x,u) := \max \{Q_I(x,u),}$ $q_{I+1}(x,u)\}$, will be a tighter under-approximation of $Q^\star$ than $Q_I$. We now derive a Benders-type procedure to achieve this, which is related to that in \cite{warrington_generalized_2017} for $V$-functions. 
\vspace{-0.3cm}

\subsection{Duality in operator $\mathcal{T_Q}$}

We start by taking the dual of the minimization problem solved inside the operator $\mathcal{T_Q}$ at some point $(\hat{x},\hat{u})$ in the state-action space. For a function $Q_I$ taking the form \eqref{eq:pwmax}, the right-hand side of \eqref{eq:TQdef} can be written equivalently as
\begin{align*}
\mathcal{T_Q}Q_I(\hat{x},\hat{u}) = \inf_{x', u'} \quad & \ell(\hat{x},\hat{u}) + \gamma \max_{i=0,\ldots,I} \{ q_i(x',u') \} \\
\text{s.~t.}\quad & x' = f(\hat{x},\hat{u}) \, , \\
& h(x',u') \leq 0 \, ,
\end{align*}
where the extra variable $x' \in \mathbb{R}^{n_x}$ is introduced to model the successor state explicitly. An epigraph variable $\alpha \in \mathbb{R}$ can be introduced to replace the inconvenient maximum operator in the objective with $I+1$ separate constraints. This leads to an equivalent problem:
\begin{subequations} \label{eq:osp_primal}
\begin{align}
\mathcal{T_Q}Q_I(\hat{x},\hat{u}) = \inf_{x', u',\alpha} \quad & \ell(\hat{x},\hat{u}) + \gamma \alpha \label{eq:osp_obj}\\
\text{s.~t.}\quad & x' = f(\hat{x},\hat{u}) \, , \label{eq:osp_dyn}\\
& h(x',u') \leq 0 \, , \label{eq:osp_sic}\\
& q_i(x', u') \leq \alpha \, , \quad i=0,\ldots,I \, . \label{eq:osp_epi}
\end{align}
\end{subequations}

Assigning the Lagrange multipliers $\nu \in \mathbb{R}^{n_x}$, $\lambda_c \in \mathbb{R}^{n_c}_+$, and $\lambda_\alpha \in \mathbb{R}^{I+1}_+$ to constraints \eqref{eq:osp_dyn}, \eqref{eq:osp_sic}, and \eqref{eq:osp_epi} respectively, one can form the Lagrangian,
\begin{align*}
&\mathcal{L}(x',u',\alpha,\nu,\lambda_c, \lambda_\alpha)  := \ell(\hat{x},\hat{u}) + \gamma \alpha + \nu^\top\!(f(\hat{x},\hat{u}) - x') \\
& \hspace{2.5cm}  + \lambda_c^\top h(x'\!, u')  + \sum_{i=0}^I \lambda_{\alpha,i} (q_i(x',u') - \alpha) \, .
\end{align*}
Following standard procedure, the dual of \eqref{eq:osp_primal} is then
\begin{subequations} \label{eq:osp_dual}
\begin{align}
J_D(\hat{x},\hat{u}) := \sup_{\nu, \lambda_c, \lambda_\alpha} \,\, & \ell(\hat{x},\hat{u}) + \nu^\top f(\hat{x},\hat{u}) + \xi(\nu, \lambda_c, \lambda_\alpha) \\
\text{s.~t.}\quad & \mathbf{1}^\top \lambda_\alpha = \gamma \, , \\
& \lambda_c \geq 0 \, , \quad \lambda_\alpha \geq 0 \, ,
\end{align}
\end{subequations}
where the function
\begin{equation*}
\xi(\nu, \lambda_c, \lambda_\alpha) := \inf_{x'\!,u'} \!\! \left\{ \! -\nu^\top \!x' + \lambda_c^\top h(x'\!,u') \!+ \! \sum_{i=0}^{I} \lambda_{\alpha,i} q_i(x'\!,u') \! \right\}
\end{equation*}
depends only on the multipliers. Although problem \eqref{eq:osp_primal} may not be convex, the objective of \eqref{eq:osp_dual} is always concave \cite[\S 5.2]{boyd_convex_2009}, and weak duality implies $J_D(\hat{x},\hat{u}) \leq \mathcal{T_Q}Q_I(\hat{x}, \hat{u})$ for any choice of parameter $(\hat{x},\hat{u}) \in \mathcal{X} \times \mathcal{U}$. 

\subsection{Generalized Benders cut}

Given a function $Q_I$ of the form \eqref{eq:pwmax} such that $Q_I \leq Q^\star$, suppose that optimal multipliers $(\hat{\nu}^\star, \hat{\lambda}_c^\star, \hat{\lambda}_\alpha^\star)$ are attained when \eqref{eq:osp_dual} is solved with parameter $(\hat{x},\hat{u})$. These can be used to form a new cut $q_{I+1}(\cdot,\cdot)$ with the following attractive properties.
\begin{lem} \label{lem:new_lb}
The function 
\begin{equation}
q_{I+1}(x,u) := \ell(x,u) + \hat{\nu}^{\star\, \top} f(x,u) + \xi(\hat{\nu}^\star,\hat{\lambda}_c^\star,\hat{\lambda}_\alpha^\star) \label{eq:newlb_def}
\end{equation}
satisfies $q_{I+1}(x,u) \leq Q^\star(x,u)$ for all $(x,u) \in \mathcal{X} \times \mathcal{U}$.
\end{lem}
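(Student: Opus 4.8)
The plan is to combine weak duality for the one-stage dual \eqref{eq:osp_dual} with the monotonicity \eqref{eq:QMonotonic} of $\mathcal{T_Q}$ and the fixed-point property \eqref{eq:optimalQ} of $Q^\star$. The single observation that makes everything work is that the feasible set of \eqref{eq:osp_dual} --- namely $\mathbf{1}^\top\lambda_\alpha = \gamma$, $\lambda_c \geq 0$, $\lambda_\alpha \geq 0$ --- does not depend on the parameter at all. The parameter enters \eqref{eq:osp_dual} only through the term $\ell(\hat{x},\hat{u}) + \nu^\top f(\hat{x},\hat{u})$, since $\xi$ is a function of the multipliers alone.

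First I would note that the multipliers $(\hat{\nu}^\star,\hat{\lambda}_c^\star,\hat{\lambda}_\alpha^\star)$, attained at $(\hat{x},\hat{u})$, are in particular dual-feasible; by the parameter-independence just noted, they remain feasible in \eqref{eq:osp_dual} for every parameter value $(x,u) \in \mathcal{X}\times\mathcal{U}$. Next I would observe that $q_{I+1}(x,u)$ as defined in \eqref{eq:newlb_def} is exactly the dual objective of \eqref{eq:osp_dual} evaluated at these fixed multipliers but with parameter $(x,u)$. Since the dual optimum $J_D(x,u)$ is the supremum of that objective over all feasible multipliers, any single feasible choice lower-bounds it, giving $q_{I+1}(x,u) \leq J_D(x,u)$ pointwise. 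Weak duality, already established in the excerpt for an arbitrary parameter, then yields $J_D(x,u) \leq \mathcal{T_Q}Q_I(x,u)$ for all $(x,u)$.

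Finally I would close the chain by connecting $\mathcal{T_Q}Q_I$ to $Q^\star$. Since $Q_I \leq Q^\star$ pointwise by hypothesis, monotonicity \eqref{eq:QMonotonic} gives $\mathcal{T_Q}Q_I \leq \mathcal{T_Q}Q^\star$, and the fixed-point identity \eqref{eq:optimalQ} gives $\mathcal{T_Q}Q^\star = Q^\star$. Stringing the inequalities together,
\[
q_{I+1}(x,u) \leq J_D(x,u) \leq \mathcal{T_Q}Q_I(x,u) \leq \mathcal{T_Q}Q^\star(x,u) = Q^\star(x,u),
\]
which is the claim.

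I expect the only genuinely delicate point to be the first one: arguing carefully that a cut built from multipliers optimal at the single point $(\hat{x},\hat{u})$ is valid as a lower bound \emph{everywhere}. This rests entirely on the dual feasible set being parameter-free and the objective being affine in $\ell$ and $f$; if either failed --- e.g. if $\xi$ depended on $(x,u)$, or the normalization $\mathbf{1}^\top\lambda_\alpha=\gamma$ coupled to the parameter --- the global validity \eqref{eq:underest} would break. Everything else is a routine chaining of weak duality, monotonicity, and the Bellman fixed point.
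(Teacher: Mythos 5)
Your proof is correct and follows essentially the same route as the paper's: you exploit the parameter-independence of the dual feasible set so that the multipliers optimal at $(\hat{x},\hat{u})$ remain feasible (hence suboptimal) for any other parameter, then chain weak duality, monotonicity \eqref{eq:QMonotonic}, and the Bellman fixed-point property \eqref{eq:optimalQ} exactly as the paper does. Your closing remark correctly identifies the crux --- that global validity of the cut rests on the dual feasible set and $\xi$ being parameter-free --- which is the same observation the paper makes mid-proof.
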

\begin{proof}

An optimal dual solution $(\hat{\nu}^\star,\hat{\lambda}_c^\star,\hat{\lambda}_\alpha^\star)$ for parameter $(\hat{x}, \hat{u})$ must in general be a suboptimal solution to problem \eqref{eq:osp_dual} when any other parameter $(\overline{x}, \overline{u}) \in \mathcal{X} \times \mathcal{U}$ is used, i.e.,
\begin{equation*}
\ell(\overline{x},\overline{u}) + \hat{\nu}^{\star T}f(\overline{x},\overline{u}) + \xi(\hat{\nu}^{\star}, \hat{\lambda}_c^\star, \hat{\lambda}_\alpha^\star) \leq J_D(\overline{x},\overline{u}) \, .
\end{equation*}
Note that $(\hat{\nu}^\star,\hat{\lambda}_c^\star,\hat{\lambda}_\alpha^\star)$ is feasible in \eqref{eq:osp_dual} for all parameters $(\overline{x},\overline{u})$, as the feasible set is independent of the parameter. From weak duality, $J_D(\overline{x},\overline{u}) \leq \mathcal{T_Q}Q_I(\overline{x},\overline{u})$. As we start with $Q_I \leq Q^\star$ on its domain, we have from the mononoticity property \eqref{eq:QMonotonic} that $\mathcal{T_Q}Q_I(\overline{x},\overline{u}) \leq \mathcal{T_Q}Q^\star(\overline{x},\overline{u})$, and the Bellman optimality condition states that $\mathcal{T_Q}Q^\star(\overline{x},\overline{u}) = Q^\star(\overline{x},\overline{u})$. Combining these relationships we obtain \[ \ell(\overline{x},\overline{u}) + \hat{\nu}^{\star T}f(\overline{x},\overline{u}) + \xi(\hat{\nu}^{\star}, \hat{\lambda}_c^\star, \hat{\lambda}_\alpha^\star) \leq Q^\star(\overline{x},\overline{u}) \, , \] and the result follows simply by noting that $(\overline{x},\overline{u})$ can refer to any $(x,u) \in \mathcal{X} \times \mathcal{U}$ in the argument above. 
\end{proof}

This proof leverages the (generalized) Benders decomposition argument, which was first developed in \cite{geoffrion_generalized_1972} to partition a two-stage problem into two subproblems linked by an approximate value function. Here we have used the properties of $Q$-functions to accommodate the infinite number of stages in problem \eqref{eq:IHProblem}. A similar result was derived for $V$-functions in \cite{warrington_generalized_2017}.

The following properties concern the violation of the Bellman optimality condition \eqref{eq:optimalQ}, or the \emph{$Q$-Bellman error}:\begin{equation} \label{eq:QBellman}
\varepsilon(x, u; Q_I) := \mathcal{T_Q}Q_I(x, u) - Q_I(x, u) \, .
\end{equation}

\begin{lem} \label{lem:PositiveEps}
If $\varepsilon(x, u; Q_I) \geq 0$ for all $(x,u) \in \mathcal{X} \times \mathcal{U}$, then $\varepsilon(x, u; Q_{I+1}) \geq 0$ for all $(x,u) \in \mathcal{X} \times \mathcal{U}$, where \[Q_{I+1}(\cdot, \cdot) = \max \{q_{I+1}(\cdot, \cdot), Q_I(\cdot, \cdot)\} \, .\]
\end{lem}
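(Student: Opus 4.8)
The plan is to establish the equivalent statement $\mathcal{T_Q}Q_{I+1}(x,u) \geq Q_{I+1}(x,u)$ for all $(x,u) \in \mathcal{X}\times\mathcal{U}$. Since $Q_{I+1} = \max\{q_{I+1}, Q_I\}$ is a pointwise maximum of just two functions, it suffices to show that $\mathcal{T_Q}Q_{I+1}(x,u)$ dominates \emph{each} of $Q_I(x,u)$ and $q_{I+1}(x,u)$ separately. The two pieces are then recombined at the end via $\max\{Q_I, q_{I+1}\} = Q_{I+1}$.

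The $Q_I$ piece is immediate. Because $Q_{I+1} \geq Q_I$ pointwise by construction, monotonicity \eqref{eq:QMonotonic} gives $\mathcal{T_Q}Q_{I+1} \geq \mathcal{T_Q}Q_I$, and the hypothesis $\varepsilon(\cdot,\cdot;Q_I)\geq 0$ gives $\mathcal{T_Q}Q_I \geq Q_I$; chaining these yields $\mathcal{T_Q}Q_{I+1}(x,u) \geq Q_I(x,u)$.

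The substantive step is the $q_{I+1}$ piece, where I must show $\mathcal{T_Q}Q_{I+1}(x,u) \geq q_{I+1}(x,u)$. Writing $z := f(x,u)$ and expanding $\mathcal{T_Q}Q_{I+1}(x,u) = \ell(x,u) + \gamma \inf_{u'\in\mathcal{U}(z)}Q_{I+1}(z,u')$ alongside the definition \eqref{eq:newlb_def} of $q_{I+1}$, the common $\ell(x,u)$ cancels and the task reduces to $\hat{\nu}^{\star\top}z + \xi(\hat{\nu}^\star,\hat{\lambda}_c^\star,\hat{\lambda}_\alpha^\star) \leq \gamma\inf_{u'\in\mathcal{U}(z)}Q_{I+1}(z,u')$. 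To prove this I would fix an arbitrary feasible $u'\in\mathcal{U}(z)$ and evaluate the infimum defining $\xi$ at the particular point $(x',u')=(z,u')$, obtaining the upper bound $\xi(\hat{\nu}^\star,\hat{\lambda}_c^\star,\hat{\lambda}_\alpha^\star) \leq -\hat{\nu}^{\star\top}z + \hat{\lambda}_c^{\star\top}h(z,u') + \sum_{i=0}^I \hat{\lambda}_{\alpha,i}^\star q_i(z,u')$. Since $h(z,u')\leq 0$ and $\hat{\lambda}_c^\star \geq 0$, the middle term is non-positive and can be dropped; since $q_i(z,u') \leq Q_I(z,u') \leq Q_{I+1}(z,u')$ for every $i$, and $\hat{\lambda}_\alpha^\star \geq 0$ with $\mathbf{1}^\top\hat{\lambda}_\alpha^\star = \gamma$, the remaining sum is at most $\gamma Q_{I+1}(z,u')$. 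Rearranging gives $\hat{\nu}^{\star\top}z + \xi(\hat{\nu}^\star,\hat{\lambda}_c^\star,\hat{\lambda}_\alpha^\star) \leq \gamma Q_{I+1}(z,u')$ for \emph{every} feasible $u'$, and since the left-hand side is independent of $u'$, taking the infimum over $u'\in\mathcal{U}(z)$ on the right completes the reduction.

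Combining the two pieces, $\mathcal{T_Q}Q_{I+1}(x,u) \geq \max\{Q_I(x,u), q_{I+1}(x,u)\} = Q_{I+1}(x,u)$ for all $(x,u)$, which is exactly $\varepsilon(\cdot,\cdot;Q_{I+1}) \geq 0$. The main obstacle is the $q_{I+1}$ piece: the key idea is to read the dual multipliers $\hat{\lambda}_\alpha^\star$ as non-negative weights of total mass $\gamma$ (via $\mathbf{1}^\top\lambda_\alpha = \gamma$), so that any such weighted combination of the cuts $q_i$ is controlled by $\gamma Q_{I+1}$, and to feed the successor state $z=f(x,u)$ back into $\xi$ as a feasible but suboptimal evaluation point. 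One subtlety worth tracking is the discount factor: the argument uses the operator in the form dictated by the primal \eqref{eq:osp_primal} (with its $\gamma\alpha$ objective), so the factor $\gamma$ must be carried consistently through the identity $\mathbf{1}^\top\hat{\lambda}_\alpha^\star = \gamma$.
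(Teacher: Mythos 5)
Your proof is correct. The paper itself supplies no argument for this lemma --- it just cites Lemma~III.3 of \cite{warrington_generalized_2017} as ``a simple adaptation'' of the $V$-function case --- and what you have written is precisely that adaptation, made explicit for $Q$-functions: split $Q_{I+1}=\max\{q_{I+1},Q_I\}$ into its two pieces; dispose of the $Q_I$ piece via monotonicity \eqref{eq:QMonotonic} together with the hypothesis $\varepsilon(\cdot,\cdot;Q_I)\geq 0$; and for the $q_{I+1}$ piece, evaluate the infimum defining $\xi$ at the successor state $z=f(x,u)$ with an arbitrary $u'\in\mathcal{U}(z)$, then invoke dual feasibility ($\hat{\lambda}_c^\star\geq 0$, $\hat{\lambda}_\alpha^\star\geq 0$, $\mathbf{1}^\top\hat{\lambda}_\alpha^\star=\gamma$) and $q_i\leq Q_I\leq Q_{I+1}$ to bound the weighted sum of cuts by $\gamma Q_{I+1}(z,u')$. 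Note also that your handling of the discount factor is the right one: the $\gamma$ absent from \eqref{eq:TQdef} as printed is a typographical inconsistency (it reappears in the objective \eqref{eq:osp_obj} and in the relation $Q^\star(x,u)=\ell(x,u)+\gamma V^\star(f(x,u))$), and your argument uses the form of the operator under which the dual \eqref{eq:osp_dual}, the cut \eqref{eq:newlb_def}, and weak duality are all mutually consistent, which is what the lemma requires.
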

\begin{proof}
A simple adaptation of \cite[Lemma III.3]{warrington_generalized_2017}.
\end{proof}

\begin{lem} \label{lem:SDIncrease}
Suppose strong duality holds between problems \eqref{eq:osp_primal} and \eqref{eq:osp_dual} and that $\varepsilon(x, u; Q_I) = \mathcal{T_Q}Q_I(x, u) - Q_I(x, u) \geq 0$ for all $(x,u) \in \mathcal{X} \times \mathcal{U}$. Then if at some $(\hat{x}, \hat{u})$  we have $\mathcal{T_Q}Q_I(\hat{x},\hat{u}) > Q_I(\hat{x},\hat{u})$, a cut there is strictly improving: \[Q_{I+1}(\hat{x},\hat{u}) > Q_I(\hat{x},\hat{u})\, , \] and the increase is equal to $\varepsilon(\hat{x}, \hat{u}; Q_I)$.
\end{lem}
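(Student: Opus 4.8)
The plan is to evaluate the new cut $q_{I+1}$ precisely at the point $(\hat{x}, \hat{u})$ that generated its multipliers, and to show that strong duality forces the cut to be tight there, in the sense that $q_{I+1}(\hat{x}, \hat{u}) = \mathcal{T_Q}Q_I(\hat{x}, \hat{u})$. First I would substitute $(x,u) = (\hat{x}, \hat{u})$ into the definition \eqref{eq:newlb_def}, obtaining
\[
q_{I+1}(\hat{x}, \hat{u}) = \ell(\hat{x}, \hat{u}) + \hat{\nu}^{\star\top} f(\hat{x}, \hat{u}) + \xi(\hat{\nu}^\star, \hat{\lambda}_c^\star, \hat{\lambda}_\alpha^\star) \, .
\]
The key observation is that the right-hand side is exactly the objective of the dual problem \eqref{eq:osp_dual}, with parameter $(\hat{x}, \hat{u})$, evaluated at the multipliers $(\hat{\nu}^\star, \hat{\lambda}_c^\star, \hat{\lambda}_\alpha^\star)$. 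Since these multipliers are by hypothesis optimal for the dual at that parameter, this value equals the dual optimum $J_D(\hat{x}, \hat{u})$.

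Next I would invoke the strong duality assumption between \eqref{eq:osp_primal} and \eqref{eq:osp_dual}, which gives $J_D(\hat{x}, \hat{u}) = \mathcal{T_Q}Q_I(\hat{x}, \hat{u})$. Chaining these two identities yields $q_{I+1}(\hat{x}, \hat{u}) = \mathcal{T_Q}Q_I(\hat{x}, \hat{u})$. By the hypothesis $\mathcal{T_Q}Q_I(\hat{x}, \hat{u}) > Q_I(\hat{x}, \hat{u})$, the cut value strictly exceeds $Q_I(\hat{x}, \hat{u})$, so the pointwise maximum selects the new cut:
\[
Q_{I+1}(\hat{x}, \hat{u}) = \max\{q_{I+1}(\hat{x}, \hat{u}), Q_I(\hat{x}, \hat{u})\} = q_{I+1}(\hat{x}, \hat{u}) > Q_I(\hat{x}, \hat{u}) \, ,
\]
which establishes the first claim.

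Finally, the claimed magnitude of the increase follows immediately by subtraction and definition \eqref{eq:QBellman}:
\[
Q_{I+1}(\hat{x}, \hat{u}) - Q_I(\hat{x}, \hat{u}) = \mathcal{T_Q}Q_I(\hat{x}, \hat{u}) - Q_I(\hat{x}, \hat{u}) = \varepsilon(\hat{x}, \hat{u}; Q_I) \, .
\]
I do not anticipate a genuine obstacle: the entire argument hinges on the single recognition that the cut, read back at its own generating point, reproduces the dual objective, so strong duality pins it exactly to $\mathcal{T_Q}Q_I(\hat{x}, \hat{u})$ rather than merely lower-bounding it (as in the generic point $(\overline{x},\overline{u})$ of Lemma \ref{lem:new_lb}). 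The global assumption $\varepsilon(\cdot,\cdot; Q_I) \geq 0$ plays no direct role in this pointwise statement beyond being the invariant carried over from Lemma \ref{lem:PositiveEps}; all that is actually consumed at $(\hat{x}, \hat{u})$ is the strict Bellman gap there together with strong duality.
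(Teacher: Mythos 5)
Your proof is correct and follows exactly the paper's argument: recognize that the cut evaluated at its generating point $(\hat{x},\hat{u})$ reproduces the optimal dual objective $J_D(\hat{x},\hat{u})$, invoke strong duality to equate this with $\mathcal{T_Q}Q_I(\hat{x},\hat{u})$, and conclude via the pointwise-maximum definition of $Q_{I+1}$. The only difference is that you spell out the intermediate identifications explicitly, whereas the paper's proof states them in a single compressed sentence.
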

\begin{proof}
If strong duality holds, we have $J_D(\hat{x},\hat{u}) = \mathcal{T_Q}Q_I(x,u)$, and the new function $q_{I+1}$ satisfies $q_{I+1}(\hat{x},\hat{u}) = \mathcal{T_Q}Q_I(\hat{x},\hat{u})$. Since $Q_{I+1}(x,u) := \max \{Q_I(x,u),q_{I+1}(x,u)\}$ the result follows.
\end{proof}
Lastly, the following property facilitates a ``greedy'' cut $q_{I+1}(\cdot,\cdot)$ with respect to some particular $(\hat{x},\hat{u})$ location.
\begin{lem} \label{lem:max_gain}
The Benders cut that yields the greatest increase at $(\hat{x},\hat{u})$, i.e., for which $Q_{I+1}(\hat{x},\hat{u}) - Q_I(\hat{x},\hat{u})$ is maximized, is that obtained by solving problem \eqref{eq:osp_dual} at $(\hat{x}, \hat{u})$.
\end{lem}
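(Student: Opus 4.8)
The plan is to recognize that the family of admissible Benders cuts is parameterized precisely by the choice of dual-feasible multipliers, and that the value of any such cut at the target point $(\hat{x},\hat{u})$ coincides exactly with the dual objective of \eqref{eq:osp_dual} evaluated at that same parameter. First I would observe, as was already noted in the proof of Lemma \ref{lem:new_lb}, that the feasible set of \eqref{eq:osp_dual} (namely $\mathbf{1}^\top\lambda_\alpha = \gamma$, $\lambda_c \geq 0$, $\lambda_\alpha \geq 0$) is independent of the parameter. Consequently every cut of the form \eqref{eq:newlb_def} arises from some dual-feasible triple $(\nu,\lambda_c,\lambda_\alpha)$, and conversely each such triple yields a valid underestimating cut $q(x,u) = \ell(x,u) + \nu^\top f(x,u) + \xi(\nu,\lambda_c,\lambda_\alpha)$. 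This fixes the set over which we optimize.

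Next I would rewrite the quantity to be maximized in a convenient form. Since $Q_{I+1}(\hat{x},\hat{u}) = \max\{Q_I(\hat{x},\hat{u}),\, q(\hat{x},\hat{u})\}$, the increase $Q_{I+1}(\hat{x},\hat{u}) - Q_I(\hat{x},\hat{u})$ equals $\max\{0,\, q(\hat{x},\hat{u}) - Q_I(\hat{x},\hat{u})\}$, which is a non-decreasing function of $q(\hat{x},\hat{u})$. Hence maximizing the increase over the admissible cuts is equivalent to maximizing the cut value $q(\hat{x},\hat{u})$ over all dual-feasible triples.

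The crux is then a direct identification. Evaluating a cut at the target point gives $q(\hat{x},\hat{u}) = \ell(\hat{x},\hat{u}) + \nu^\top f(\hat{x},\hat{u}) + \xi(\nu,\lambda_c,\lambda_\alpha)$, which is exactly the objective of \eqref{eq:osp_dual} at parameter $(\hat{x},\hat{u})$. Therefore maximizing $q(\hat{x},\hat{u})$ over dual-feasible triples is literally the dual problem \eqref{eq:osp_dual} solved at $(\hat{x},\hat{u})$; its maximizer is the optimal dual solution $(\hat{\nu}^\star,\hat{\lambda}_c^\star,\hat{\lambda}_\alpha^\star)$ there (which we assume attained, as in Lemma \ref{lem:new_lb}), yielding the value $J_D(\hat{x},\hat{u})$. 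This shows that the greedy cut is precisely the one built by solving \eqref{eq:osp_dual} at $(\hat{x},\hat{u})$.

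I anticipate that the main obstacle is conceptual rather than technical: one must argue carefully that the set of cuts being optimized over is \emph{exactly} the set of cuts induced by dual-feasible multipliers, so that no superior cut is overlooked, and then spot the identity that the cut's value at the target point is the dual objective at that point. Once this parameter-independence of the feasible set and this identification are made explicit, the lemma follows with no further computation, since the reduction to \eqref{eq:osp_dual} at $(\hat{x},\hat{u})$ is then immediate.
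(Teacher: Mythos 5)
Your proposal is correct and is essentially the paper's own argument made explicit: the paper's one-line proof (``reverse the roles of $(\hat{x},\hat{u})$ and $(\overline{x},\overline{u})$ in Lemma \ref{lem:new_lb}'') rests on exactly the two facts you isolate, namely that the dual feasible set of \eqref{eq:osp_dual} is parameter-independent and that a cut's value at $(\hat{x},\hat{u})$ coincides with the dual objective at that parameter, so any cut built from multipliers optimal elsewhere is dual-feasible but generally suboptimal at $(\hat{x},\hat{u})$ and hence bounded by $J_D(\hat{x},\hat{u})$. Your version is marginally more general in that you optimize over all dual-feasible triples rather than only dual-optimal solutions at other points, but the mechanism is identical.
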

\begin{proof}
The result follows by reversing the roles of $(\hat{x},\hat{u})$ and $(\overline{x}, \overline{u})$ in the proof of Lemma \ref{lem:new_lb}.
\end{proof}

\section{Benders algorithm for Q-Functions} \label{sec:Algorithm}

We propose Algorithm \ref{alg:QBenders} as a means of approximating $Q^\star$ by generating Benders cuts of the form \eqref{eq:newlb_def}. It starts with $q_0 = \ell$, which from \eqref{eq:Qdef} trivially lower-bounds $Q^\star$, and by Lemmas \ref{lem:PositiveEps} and \ref{lem:SDIncrease} guarantees $\varepsilon(x, u; Q_I) \geq 0$ for all $(x,u) \in \mathcal{X} \times \mathcal{U}$ and for all $I \geq 0$. New cuts are created at certain points $(x_m, u_m)$, and Variants A and B differ in how these are chosen:
\begin{itemize}
\item[A.] Select a list of state-input pairs $\zalgm := \{(x_1, u_1),\ldots, $ $(x_M, u_M)\}$ \textit{a priori}, and choose a random $(x_m,u_m)$ at each algorithm iteration.
\item[B.] Select a list of state space points $\xalgm := \{x_1, \ldots, x_M\}$ \textit{a priori}, and within the algorithm pick a random $x_m$, letting $u_m$ follow from policy \eqref{eq:Qpolicy} parameterized by $Q_I$.
\end{itemize}

We now state convergence results for both variants.

\begin{algorithm}[t]
\caption{$Q$-Benders algorithm} 
\label{alg:QBenders}
\begin{algorithmic}[1]
\STATE \textbf{Variant A:} Choose $\zalgm := \{(x_1, u_1), \ldots, (x_M, u_M)\}$
\STATE \textbf{Variant B:} Choose $\xalgm := \{x_1, \ldots, x_M\}$
\STATE Set $I = 0$ and $q_0(x,u) = \ell(x,u)$
\WHILE{TRUE}
\STATE $Q_I(\cdot,\cdot) \gets \max_{i=0,\ldots,I} q_i(\cdot,\cdot)$
\FOR{$m=1,\ldots,M$} \label{algl:BellGapStart}
\STATE \textbf{Variant A:} $(x_m, u_m)$ taken from $\zalgm$ \label{algl:VarAxmum}
\STATE \textbf{Variant B:} $x_m$ taken from $\xalgm$, 
\STATE \hspace{1.62cm} $u_m \gets \arg\min_{u \in \mathcal{U}(x_m)} Q_I(x_m,u)$ \label{algl:VarBum}
\STATE $\varepsilon(x_m, u_m; Q_I) \gets \mathcal{T_Q}Q_I(x_m, u_m) - Q_I(x_m, u_m)$
\ENDFOR \label{algl:BellGapEnd}
\IF{$\max_{m=1,\ldots,M} \{\varepsilon(x_m, u_m; Q_I)\} \leq \varepsilon_\text{tol}$} \label{algl:ConvCheck2}
\STATE \textbf{break}
\ENDIF \label{algl:ConvCheck2_2}
\STATE $m \gets \texttt{UniformRandom}(1,2,\ldots,M)$ \label{algl:Pickx}
\STATE Pick $(x_m, u_m)$ as in lines \ref{algl:VarAxmum}-\ref{algl:VarBum}
\IF{problem \eqref{eq:osp_primal} feasible with parameter $(x_m, u_m)$ \AND dual optimal solution $(\hat{\nu}^\star, \hat{\lambda}_c^\star, \hat{\lambda}_\alpha^\star)$ available}
\STATE Add $q_{I+1}(\cdot,\cdot)$ param'd~by $(\hat{\nu}^\star, \hat{\lambda}_c^\star, \hat{\lambda}_\alpha^\star)$ as in \eqref{eq:newlb_def} \label{algl:AddLB2}
\ENDIF
\STATE $I \gets I+1$
\ENDWHILE
\STATE Return $Q_I(\cdot,\cdot) = \max_{i=0,\ldots,I} q_i(\cdot,\cdot)$
\end{algorithmic}
\end{algorithm}

\subsection{Fixed $(x,u)$ pairs}

The following results hold for Variant A. We omit the proofs of both, because they carry across with little modification from the $V$-function results in \cite[Thms.~III.5 and III.6]{warrington_generalized_2017}:

\begin{thm}[Pointwise convergence of $\{Q_I(x,u)\}_{I=0}^\infty$] \label{thm:QLimEverywhere}
For each $\smash{(x,u) \in \mathcal{X}\times\mathcal{U}}$ for which $Q^\star(x,u)$ is finite, there exists a limiting value $\smash{Q_{\rm lim}(x,u) \leq Q^\star(x,u)}$ such that $\lim_{I \rightarrow \infty} Q_I(x,u) = Q_{\rm lim}(x,u)$.
\end{thm}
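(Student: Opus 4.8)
The plan is to recognize that, for each fixed $(x,u)$, the real-valued sequence $\{Q_I(x,u)\}_{I=0}^\infty$ is monotone non-decreasing and bounded above, so that the existence of a limit follows immediately from the monotone convergence theorem for real sequences. No appeal to strong duality or to the details of the cut construction is needed for this particular claim.

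First I would establish monotonicity. By the pointwise-maximum form \eqref{eq:pwmax}, each iteration of the algorithm either leaves the approximation unchanged (when no admissible cut is added) or replaces it by $Q_{I+1}(\cdot,\cdot) = \max\{Q_I(\cdot,\cdot), q_{I+1}(\cdot,\cdot)\}$. In either case $Q_{I+1}(x,u) \geq Q_I(x,u)$ at every point, since the maximum is taken over a weakly larger collection of functions. Hence for each fixed $(x,u)$ the scalar sequence $\{Q_I(x,u)\}_I$ is non-decreasing in $I$.

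Next I would establish the uniform upper bound. Lemma \ref{lem:new_lb} guarantees $q_i(x,u) \leq Q^\star(x,u)$ for every generated cut $q_i$ and every $(x,u) \in \mathcal{X}\times\mathcal{U}$, and the same holds trivially for the initializing cut $q_0 = \ell$. Consequently $Q_I(x,u) = \max_{i=0,\ldots,I} q_i(x,u) \leq Q^\star(x,u)$ for all $I$. Restricting attention to a point $(x,u)$ where $Q^\star(x,u)$ is finite, the sequence $\{Q_I(x,u)\}_I$ is then bounded above by the finite number $Q^\star(x,u)$. A non-decreasing real sequence bounded above converges, so the limit $Q_{\rm lim}(x,u) := \lim_{I\to\infty} Q_I(x,u)$ exists, and since every term satisfies $Q_I(x,u) \leq Q^\star(x,u)$, the limit inherits $Q_{\rm lim}(x,u) \leq Q^\star(x,u)$.

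There is essentially no hard step here: the argument is the textbook monotone–bounded convergence argument, and the only point requiring care is the finiteness hypothesis on $Q^\star(x,u)$, which is precisely what supplies a finite upper bound and rules out divergence to $+\infty$. I would also emphasize that this is a per-realization statement: the cut sequence produced by the algorithm depends on the random index draws in line \ref{algl:Pickx} (and, in Variant B, on the policy-driven choice of $u_m$), but the monotonicity and boundedness properties above hold along every sample path, so convergence holds for each realization, with a limiting value $Q_{\rm lim}(x,u)$ that may itself depend on the path.
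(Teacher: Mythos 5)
Your proof is correct and is essentially the argument the paper relies on: the paper omits the proof of this theorem, deferring to the $V$-function analogue in \cite{warrington_generalized_2017}, which is precisely your monotone-bounded-sequence argument (the paper's own proof of Theorem \ref{thm:Alg2Limit} uses the same reasoning). The only nicety worth making explicit is that Lemma \ref{lem:new_lb} yields $q_{I+1} \leq Q^\star$ only under the hypothesis $Q_I \leq Q^\star$, so your uniform upper bound is formally an induction starting from $q_0 = \ell \leq Q^\star$.
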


\begin{thm}[Finite termination of Variant A] \label{thm:EpsConv}
Suppose the following conditions are met:
\begin{enumerate}
\item[(i)] Strong duality holds for the one-stage problem \eqref{eq:osp_primal} with parameter $(x_m, u_m)$ each time it is solved, for each $(x_m,u_m) \in \zalgm$.
\item[(ii)] $Q^\star(x_m,u_m)$ is finite for each pair $(x_m, u_m) \in \zalgm$.
\end{enumerate}
Then Variant A of Algorithm \ref{alg:QBenders} terminates in finite iterations with probability $1$ for any tolerance $\varepsilon_\text{\normalfont tol} > 0$.
\end{thm}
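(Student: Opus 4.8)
The plan is to argue by contradiction: I assume that with positive probability the algorithm never passes the convergence test on line \ref{algl:ConvCheck2}, and derive a contradiction from the fact that $Q_I$ can only increase by a finite total amount at each of the finitely many points of \zalg. The backbone is a \emph{deterministic} bound on the number of cuts that can ever yield a large improvement, combined with a conditional Borel--Cantelli argument that copes with the \emph{random} selection of cut locations on line \ref{algl:Pickx}. First I would record the monotonicity and boundedness that drive everything. Since $q_0 = \ell \leq Q^\star$ and every later cut satisfies $q_{I+1} \leq Q^\star$ by Lemma \ref{lem:new_lb}, and $Q_I = \max_{i\leq I} q_i$ is a running pointwise maximum, the sequence $\{Q_I(x,u)\}$ is non-decreasing in $I$ and bounded above by $Q^\star(x,u)$ everywhere. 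On \zalg condition (ii) makes $Q^\star(x_m,u_m)$ finite, so Theorem \ref{thm:QLimEverywhere} supplies a finite limit $Q_{\rm lim}(x_m,u_m)$. Starting from $q_0 = \ell$ also guarantees, via Lemmas \ref{lem:PositiveEps} and \ref{lem:SDIncrease}, that $\varepsilon(\cdot,\cdot;Q_I) \geq 0$ for all $I$, so Lemma \ref{lem:SDIncrease} is applicable at every selected point under condition (i).

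Second, I would bound the number of ``big'' cuts, meaning a cut added at iteration $I$ at the selected pair $(x_m,u_m)$ for which $\varepsilon(x_m,u_m;Q_I) > \varepsilon_\text{tol}$. By Lemma \ref{lem:SDIncrease}, strong duality (condition (i)) forces such a cut to raise the value at its own location by exactly $\varepsilon(x_m,u_m;Q_I)$, so $Q_{I+1}(x_m,u_m) - Q_I(x_m,u_m) > \varepsilon_\text{tol}$. For each fixed $m$ the increments $Q_{I+1}(x_m,u_m) - Q_I(x_m,u_m)$ are non-negative and telescope, hence sum over all iterations to at most $Q_{\rm lim}(x_m,u_m) - \ell(x_m,u_m) < \infty$. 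Therefore at most $\lceil (Q_{\rm lim}(x_m,u_m) - \ell(x_m,u_m))/\varepsilon_\text{tol} \rceil$ big cuts can ever be placed at pair $m$, and summing over the finitely many pairs in \zalg yields a pathwise bound $N < \infty$ on the total number of big cuts in any run whatsoever.

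Finally I would close with Borel--Cantelli. Let $E_\infty$ be the event that the algorithm never terminates. On $E_\infty$ the test on line \ref{algl:ConvCheck2} fails at every iteration, so $\max_m \varepsilon(x_m,u_m;Q_I) > \varepsilon_\text{tol}$ and at least one pair is ``bad'' at each iteration. Conditioned on the history $\mathcal{F}_I$, the uniform draw on line \ref{algl:Pickx} selects a bad pair with probability at least $1/M$, and under condition (i) the primal is feasible with an attained dual optimum, so the resulting cut is actually added and is a big cut. Writing $B_I$ for the event of a big cut at iteration $I$, we thus have $\mathbb{P}(B_I \mid \mathcal{F}_I) \geq 1/M$ on $E_\infty$, whence $\sum_I \mathbb{P}(B_I \mid \mathcal{F}_I) = \infty$ there. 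L\'evy's conditional extension of the second Borel--Cantelli lemma then forces $B_I$ to occur infinitely often almost surely on $E_\infty$, contradicting the deterministic bound of $N$ big cuts. Hence $\mathbb{P}(E_\infty) = 0$, which is the claim.

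I expect the main obstacle to be precisely this last step: because the cut location is drawn at random rather than chosen at the worst point, the deterministic ``finitely many big cuts'' count does not by itself force termination, and one must invoke the conditional Borel--Cantelli lemma to guarantee that a bad pair is eventually selected with probability one. Care is also needed to phrase the argument through the conditional probabilities $\mathbb{P}(B_I \mid \mathcal{F}_I)$, since the identity of the bad pairs is adapted to the filtration rather than fixed in advance; this is what makes the conditional form of the lemma the right tool.
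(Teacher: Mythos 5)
Your proof is correct, and it rests on the same two pillars as the argument the paper appeals to (the proof of Theorem \ref{thm:EpsConv} is omitted in the paper and deferred to the $V$-function results of \cite{warrington_generalized_2017}): first, $Q_I(x_m,u_m)$ is non-decreasing in $I$ and bounded above by the finite $Q^\star(x_m,u_m)$, so the total improvement at each pair in $\zalgm$ is finite; second, under strong duality Lemma \ref{lem:SDIncrease} makes a cut at the selected pair raise the value there by exactly the Bellman error, so only finitely many cuts with error exceeding $\varepsilon_\text{tol}$ can ever be placed. Where you genuinely diverge is in how the randomness of line \ref{algl:Pickx} is handled. The paper's template (visible in its own proof of Theorem \ref{thm:EpsConv2}) argues per index: with probability $1$ each $m$ is selected infinitely often, and along that subsequence the errors must tend to zero, from which termination is concluded. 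That per-subsequence argument leaves implicit the simultaneity issue you flag: termination requires all $M$ errors to be below tolerance at the \emph{same} iteration, and cuts added at other indices can push the error at $(x_m,u_m)$ back up between visits to $m$. Your global contradiction --- a pathwise bound $N$ on the number of ``big'' cuts, combined with L\'evy's conditional Borel--Cantelli lemma showing that on the non-termination event a currently-bad pair is drawn (hence a big cut made) infinitely often almost surely --- closes that gap explicitly, at the cost of slightly heavier probabilistic machinery; this is a tighter formalization of the same underlying mechanism rather than a different mechanism. Two minor points: primal feasibility of \eqref{eq:osp_primal} at $(x_m,u_m)$ follows from condition (ii) (finiteness of $Q^\star(x_m,u_m)$ guarantees a feasible successor input, and the $q_i$ are real-valued) rather than from condition (i); and, as you implicitly do, condition (i) must be read as including dual attainment, since a zero duality gap alone would not allow line \ref{algl:AddLB2} to execute. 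Neither affects the validity of your argument.
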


\subsection{Fixed $x$, policy-driven $u$}

Although Variant A has attractive convergence properties, it learns a $Q$-function based only on performance at pre-selected pairs $(x,u)$, in the sense of minimizing the $Q$-Bellman error there. Variant B instead learns a $Q$-function based on performance at $(x,u)$ pairs in which the $u$ is consistent with the policy derived from the learnt $Q$-function. One expects this criterion to be more relevant to performance of the final policy, as state-input trajectories will pass closer to these points.

Finite termination of Variant B is our main result, which we now state precisely along with the required assumptions.

\begin{ass} \label{ass:FiniteQ}
For each $x_m \in \xalgm$, the set of feasible inputs $\mathcal{U}(x_m)$ contains an element $\hat{u}$ such that $Q^\star(x_m, \hat{u}) < \infty$.
\end{ass}

This assumption implies $V^\star(x_m)$ is finite for each $x_m$. Introducing the notation $\underline{Q}(x_m) := \inf_{u \in \mathcal{U}(x_m)} Q(x_m, u)$, the following holds:
\begin{thm}[Monotone convergence of $\{\underline{Q}_I(x_m)\}_{I=0}^\infty$] \label{thm:Alg2Limit}
Under Assumption \ref{ass:FiniteQ}, the limit $\underline{Q}_{\lim}(x_m) := \lim_{I\rightarrow\infty} \underline{Q}_I(x_m)$ exists for each $x_m \in \xalgm$.
\end{thm}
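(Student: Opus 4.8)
The plan is to recognize this as a direct application of the monotone convergence theorem for real sequences: for each fixed $x_m \in \xalgm$ I would show that the scalar sequence $\{\underline{Q}_I(x_m)\}_{I=0}^\infty$ is (i) non-decreasing, (ii) bounded above, and (iii) never equal to $-\infty$, whence it converges to a finite limit $\underline{Q}_{\lim}(x_m)$.

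First I would establish monotonicity of the underlying function sequence. Because each iteration either leaves $Q_I$ unchanged or appends a cut to form $Q_{I+1}(\cdot,\cdot) = \max\{Q_I(\cdot,\cdot), q_{I+1}(\cdot,\cdot)\}$, we have $Q_{I+1}(x,u) \geq Q_I(x,u)$ for all $(x,u) \in \mathcal{X}\times\mathcal{U}$ and all $I$. Taking the infimum over $u \in \mathcal{U}(x_m)$ preserves the inequality: for any such $u$, $\underline{Q}_I(x_m) \leq Q_I(x_m,u) \leq Q_{I+1}(x_m,u)$, so that $\underline{Q}_I(x_m) \leq \inf_{u \in \mathcal{U}(x_m)} Q_{I+1}(x_m,u) = \underline{Q}_{I+1}(x_m)$. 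Hence $\{\underline{Q}_I(x_m)\}$ is non-decreasing.

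Next I would pin down the two bounds. For finiteness from below, I would use that the algorithm is initialised with $q_0 = \ell$ and that $\ell \geq 0$, so $Q_I \geq q_0 = \ell \geq 0$ everywhere; consequently $\underline{Q}_I(x_m) \geq 0 > -\infty$ for every $I$. For the uniform upper bound I would invoke Lemma \ref{lem:new_lb}, which guarantees $q_i \leq Q^\star$ for each cut and hence $Q_I \leq Q^\star$ pointwise, together with Assumption \ref{ass:FiniteQ}: the latter supplies a feasible $\hat{u} \in \mathcal{U}(x_m)$ with $Q^\star(x_m,\hat{u}) < \infty$, so that $\underline{Q}_I(x_m) \leq Q_I(x_m,\hat{u}) \leq Q^\star(x_m,\hat{u}) < \infty$ for all $I$. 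A non-decreasing real sequence bounded above converges, which yields the claimed limit.

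The argument is short, and the main thing to be careful about is what is \emph{not} being asserted: the theorem claims only convergence of the scalar sequence $\underline{Q}_I(x_m)$, not that the infimum is attained at any $I$, nor that $\underline{Q}_{\lim}(x_m)$ equals $\inf_{u\in\mathcal{U}(x_m)} Q_{\lim}(x_m,u)$ — interchanging $\lim$ and $\inf$ would require additional regularity and is deliberately sidestepped. The only genuine role of the hypotheses is to keep the limit finite: Assumption \ref{ass:FiniteQ} is precisely what prevents the monotone sequence from escaping to $+\infty$, while non-negativity of $\ell$ keeps it above $-\infty$. I do not expect any serious obstacle here; the subtlety is purely in stating the conclusion at the correct, modest level of strength.
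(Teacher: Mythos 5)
Your proof is correct and takes essentially the same route as the paper's: monotonicity of the function sequence $\{Q_I\}_{I=0}^\infty$ gives monotonicity of $\{\underline{Q}_I(x_m)\}_{I=0}^\infty$, Lemma \ref{lem:new_lb} combined with Assumption \ref{ass:FiniteQ} gives the uniform upper bound, and the Monotone Convergence Theorem concludes. Your extra observations --- the explicit lower bound via $q_0 = \ell \geq 0$ and the caution against interchanging $\lim$ and $\inf$ --- are minor refinements of the same argument, not a different approach.
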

\begin{proof}
It follows from Assumption \ref{ass:FiniteQ} that $\underline{Q}^\star(x_m) < \infty$, and from Lemma \ref{lem:new_lb}, $Q_I(x_m, u) \leq Q^\star(x_m,u)$ for all $u \in \mathcal{U}(x_m)$. Thus, $\underline{Q}_I(x_m) < \infty$ at each iteration $I$. As the sequence of functions $\{Q_I\}_{I=0}^\infty$ increases monotonically, the sequence $\{\underline{Q}_I(x_m)\}_{I=0}^\infty$ must also increase monotonically. This latter sequence is bounded from above, thus the limit $\lim_{I\rightarrow\infty}\underline{Q}_I(x_m) = \underline{Q}_{\lim}(x_m)$ exists from the Monotone Convergence Theorem.
\end{proof}

An additional performance guarantee for Variant B is available when the following additional assumptions hold.

\begin{ass} \label{ass:CompactU}
For each $x_m \in \xalgm$, set $\mathcal{U}(x_m)$ is compact, and each entry of $f(x_m, u)$ is Lipschitz-continuous on $\mathcal{U}(x_m)$.
\end{ass}

\begin{ass} \label{ass:StrongDuality}
The problem data in \eqref{eq:IHProblem} is such that the lower-bounding functions $q_0, q_1, \ldots$ generated in Variant B:
\begin{itemize}
\item[(i)] Maintain strong duality between problems \eqref{eq:osp_primal}  and \eqref{eq:osp_dual} with parameter $(x_m,u)$ at each iteration of the algorithm, with $u = \pi(x_m; Q_I)$, for all $x_m \in \xalgm$.
\item[(ii)] Are Lipschitz continuous in $u$ with some constant $L_m$ common to all functions $q_i$, for each $x_m \in \xalgm$.
\end{itemize}
\end{ass}
Assumptions \ref{ass:CompactU} and \ref{ass:StrongDuality} must be verified for a given problem. A widespread setting where these hold is the constrained, stable linear-quadratic regulator (LQR); see the Appendix.

\begin{thm}[Finite termination of Variant B] \label{thm:EpsConv2}
Suppose that in addition to Assumption \ref{ass:FiniteQ}, Assumptions \ref{ass:CompactU} and \ref{ass:StrongDuality} hold. Then Variant B of Algorithm \ref{alg:QBenders} terminates in finite iterations with probability $1$ for any tolerance $\varepsilon_\text{\normalfont tol} > 0$.
\end{thm}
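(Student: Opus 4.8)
The plan is to argue by contradiction: I assume the algorithm fails to terminate on a set of sample paths of positive probability, and derive a contradiction by combining a \emph{deterministic} uniform-convergence estimate at each fixed state $x_m$ with a Borel--Cantelli-type argument on the uniform random index selection in line~\ref{algl:Pickx}. Theorem~\ref{thm:Alg2Limit} already supplies a monotone limit $\underline{Q}_{\lim}(x_m)$ for each $x_m\in\xalgm$; the first task is to strengthen this to uniform convergence of the full functions. Because each $Q_I(x_m,\cdot)$ is $L_m$-Lipschitz in $u$ (Assumption~\ref{ass:StrongDuality}(ii)) and $\underline{Q}_I(x_m)\le Q^\star(x_m,\hat u)<\infty$ (Assumption~\ref{ass:FiniteQ} and Lemma~\ref{lem:new_lb}), the iterates are uniformly bounded on the compact set $\mathcal{U}(x_m)$ (Assumption~\ref{ass:CompactU}); being monotone increasing they converge pointwise to a finite, $L_m$-Lipschitz (hence continuous) limit $Q_{\rm lim}(x_m,\cdot)$, so Dini's theorem makes the convergence uniform. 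I write $\eta_I := \max_m \sup_{u\in\mathcal{U}(x_m)}\big(Q_{\rm lim}(x_m,u)-Q_I(x_m,u)\big)$, so that $\eta_I\downarrow 0$.

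The central estimate bounds the Bellman error at any state where a cut is placed. Fix an iteration $I$ at which the random index is $m$, and let $u_m=\arg\min_{u\in\mathcal{U}(x_m)}Q_I(x_m,u)$, so $Q_I(x_m,u_m)=\underline{Q}_I(x_m)$. Assumption~\ref{ass:StrongDuality}(i) guarantees strong duality (hence a dual optimum is available and a cut is indeed added on line~\ref{algl:AddLB2}), so by Lemma~\ref{lem:SDIncrease} the new cut satisfies $q_{I+1}(x_m,u_m)=\mathcal{T_Q}Q_I(x_m,u_m)=\underline{Q}_I(x_m)+\varepsilon(x_m,u_m;Q_I)$. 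Monotonicity of the function sequence gives $q_{I+1}(x_m,u_m)\le Q_{\rm lim}(x_m,u_m)$, while uniform convergence evaluated at the minimizer $u_m$ gives $Q_{\rm lim}(x_m,u_m)\le Q_I(x_m,u_m)+\eta_I=\underline{Q}_I(x_m)+\eta_I$. Chaining these yields $\varepsilon(x_m,u_m;Q_I)\le \eta_I$: whenever a cut is placed at a state $x_m$, the Bellman error there is at most the current uniform-convergence modulus, which tends to zero.

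It remains to close the probabilistic loop. Let $\mathcal{N}$ be the event that the algorithm never terminates; on $\mathcal{N}$ the set $B_I:=\{m:\varepsilon(x_m,u_m;Q_I)>\varepsilon_{\text{tol}}\}$ is nonempty at every iteration, and there is a random, a.s.\ finite time after which $\eta_I<\varepsilon_{\text{tol}}$. By the central estimate, past that time the selected index can never lie in $B_I$, since a cut there would force $\varepsilon(x_m,u_m;Q_I)\le\eta_I<\varepsilon_{\text{tol}}$. Thus $\mathcal{N}\subseteq\bigcup_{n}\{\forall I\ge n:\ B_I\neq\varnothing \text{ and } m_I\notin B_I\}$. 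Since $Q_I$, and hence $B_I$, are determined by the selections $m_1,\dots,m_{I-1}$ while $m_I$ is drawn independently and uniformly, the conditional probability of $m_I\notin B_I$ given the past is at most $1-1/M$ on $\{B_I\neq\varnothing\}$; a tower/telescoping bound gives probability at most $(1-1/M)^{N}$ over any window of length $N$, which vanishes as $N\to\infty$. Hence every set in the countable union has probability zero, so $P(\mathcal{N})=0$ and Variant~B terminates in finitely many iterations with probability one.

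The main obstacle is precisely the feature that distinguishes Variant~B from Variant~A: the point $u_m$ at which a cut is taken depends on the current iterate $Q_I$, so the clean per-point telescoping used for fixed pairs in Theorem~\ref{thm:EpsConv} is unavailable---adding a cut raises $Q_I(x_m,u_m)$ by $\varepsilon(x_m,u_m;Q_I)$, yet the minimizer may jump and $\underline{Q}_I(x_m)$ may increase by far less. The key to circumventing this is to replace the increment argument by the uniform-convergence estimate above, so the delicate step is establishing uniform (not merely pointwise) convergence of $Q_I(x_m,\cdot)$ via Dini's theorem; for this the compactness in Assumption~\ref{ass:CompactU}, the common Lipschitz constant in Assumption~\ref{ass:StrongDuality}(ii), and the finiteness of the limit from Assumption~\ref{ass:FiniteQ} are all essential. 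A secondary technical point is that the time at which $\eta_I<\varepsilon_{\text{tol}}$ is random, which is why the probabilistic step is phrased as a countable union over deterministic thresholds $n$ rather than through a single stopping time.
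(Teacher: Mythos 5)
Your proof is correct, but it takes a genuinely different route from the paper's. The paper also argues by contradiction, but geometrically rather than analytically: supposing the Bellman errors along the subsequence of iterations $I_m$ at which index $m$ is selected do not converge to zero, there are infinitely many cuts whose gain at $(x_m,\pi(x_m;Q_{I_m}))$ is at least some fixed $\delta>0$ (the same strong-duality identity from Lemma \ref{lem:SDIncrease} that you use); each such cut, being $L_m$-Lipschitz in $u$, removes at least a fixed volume (a function of $\delta$, $L_m$, $n_u$) from the truncated hypograph $\mathcal{H}_m$ bounded above by $Q^\star(x_m,\hat u)+L_m\|u-\hat u\|$, which has finite volume by compactness of $\mathcal{U}(x_m)$ --- a packing contradiction. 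Your argument replaces this volume-removal mechanism with uniform convergence of $Q_I(x_m,\cdot)$ (via Dini's theorem, or equivalently equicontinuity from the common Lipschitz constant), so that the gain available to any cut --- and hence, by Lemma \ref{lem:SDIncrease}, the Bellman error at any selected point --- is dominated by the vanishing modulus $\eta_I$. Both mechanisms rest on exactly the same assumptions and the same key identity, so neither is more general; what yours buys is rigor in the probabilistic step. The paper's treatment of ``with probability $1$'' is informal: it does not spell out how convergence to zero along each subsequence $I_m$ separately yields simultaneous satisfaction of the termination test $\max_m\varepsilon(x_m,\pi(x_m;Q_I);Q_I)\le\varepsilon_{\rm tol}$ at a single iteration, whereas your conditional-probability bound on avoiding the nonempty bad sets $B_I$, combined with the countable union over deterministic thresholds $n$ to absorb the randomness of the time at which $\eta_I<\varepsilon_{\rm tol}$, closes that gap explicitly. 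What the paper's version buys in exchange is a quantitative flavor: the volume argument implicitly bounds how many cuts with error exceeding $\delta$ can ever occur at each $x_m$, which points toward iteration-complexity estimates that a pure uniform-convergence argument does not provide.
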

\begin{proof}
Let the sequence of iterations $I$ where a given $m$ is chosen in line \ref{algl:Pickx} of the algorithm be indexed by $I_m$. With probability $1$, this sequence is infinitely long for each $m$. We now show that the sequence of $Q$-Bellman errors \[\{\varepsilon(x_m, \pi(x_m; Q_{I_m}); Q_{I_m})\}_{I_m=0}^\infty\] is a Cauchy sequence converging to zero for each $x_m \in \xalgm$. As $\mathcal{U}(x_m)$ is compact for all $x_m \in \xalgm$, the policy $\pi(x_m; Q_I)$ defined in \eqref{eq:Qpolicy} can always be evaluated.

Recall that Lemma \ref{lem:PositiveEps} implies $\varepsilon(x_m, \pi(x_m; Q_{I_m}); $ $Q_{I_m}) \geq 0$ for all $x_m$ and $I_m$. Suppose for the sake of contradiction that the sequence $\{\varepsilon(x_m, \pi(x_m; Q_{I_m}); Q_{I_m})\}_{I_m=0}^\infty$ is \emph{not} a Cauchy sequence converging to $0$. Then there must exist some $\delta > 0$ for which there is no iteration number beyond which $\varepsilon(x_m, \pi(x_m; Q_{I_m}); Q_{I_m}) < \delta$. Whenever point $x_m$ is picked in line \ref{algl:Pickx} of the algorithm, the strong duality condition in Assumption \ref{ass:StrongDuality} and Lemma \ref{lem:SDIncrease} together imply that 
\begin{align*}
Q_{I_m+1}(x_m, \pi(x_m; Q_{I_m})) - Q_{I_m}(x_m, \pi(x_m; Q_{I_m}))\hspace{1.4cm} \\
\hspace{4.6cm} = \varepsilon(x_m, \pi(x_m; Q_{I_m}); Q_{I_m}) \, .
\end{align*}
If $\{\varepsilon(x_m, \pi(x_m; Q_{I_m}); Q_{I_m})\}_{I_m=0}^\infty$ is not a Cauchy sequence, there will be an infinite number of occasions on which \[Q_{I_m+1}(x_m, \pi(x_m; Q_{I_m})) - Q_{I_m}(x_m, \pi(x_m; Q_{I_m})) \geq \delta \, .\] 

Furthermore, Assumption \ref{ass:FiniteQ} and part (ii) of Assumption \ref{ass:StrongDuality} together imply that \[ Q_I(x_m, u) \leq Q^\star(x_m, \hat{u}) + L_m||u - \hat{u}|| \, , \, \forall u \in \mathcal{U}(x_m) \, , \, \forall I \in \mathbb{N}, \] as $Q_I$ is always a lower bound on $Q^\star$. Compactness of $\mathcal{U}(x_m)$ implies that the volume of the truncated hypograph \[ \mathcal{H}_m := \left\{u,s  \left| \!\! \begin{array}{l}Q_I(x_m,u) \leq s \leq Q^\star(x_m, \hat{u}) + L_m||u - \hat{u}|| , \\ u \in \mathcal{U}(x_m) \end{array} \!\!\! \right. \right\}\] is finite for each $x_m$; recall that $q_0(\cdot, \cdot) \equiv \ell(\cdot, \cdot) \geq 0$. 

Due to Lipschitz continuity, cut $q_{I_m + 1}$ decreases the volume of $\mathcal{H}_m$ by an amount that is lower bounded by a function of $\delta$, $L_m$, and the input dimension $n_u$. Thus, this volume cannot be removed infinitely many times from $\mathcal{H}_m$, and we have a contradiction. Cuts made at iterations $I$ where some other index $m' \neq m$ is picked in line \ref{algl:Pickx} may also remove some volume from $\mathcal{H}_m$, but this does not affect the argument. Thus $\{\varepsilon(x_m, \pi(x_m; Q_{I_m}); Q_{I_m})\}_{I_m=0}^\infty$ is a Cauchy sequence converging to zero, and Algorithm \ref{alg:QBenders} terminates in finite iterations for any $\varepsilon_\text{tol} > 0$.
\end{proof}

Therefore, under certain assumptions one need only specify $\xalgm = \{x_1, \ldots, x_M\}$, and Variant B minimizes the $Q$-Bellman error at a $u \in \mathcal{U}(x_m)$ associated with each $x_m \in \xalgm$ that is consistent with policy \eqref{eq:Qpolicy}. One then expects the optimal $Q$-function to be learnt more accurately around the policy surface than elsewhere in the state-action space.

\section{Numerical examples} \label{sec:Numerical}

We now report two numerical tests of Algorithm \ref{alg:QBenders}. In both cases, systems were of the class \textit{\textbf{C-LQR}} described in the Appendix, for which finite termination of Variants A and B is guaranteed by Theorems \ref{thm:EpsConv} and \ref{thm:EpsConv2} respectively, and lower bounding functions $q_i(\cdot,\cdot)$ are quadratic. All tests used the stage cost $\ell(x_t,u_t) = \tfrac{1}{2}x_t^\top x_t + \tfrac{1}{2}u_t^\top u_t$, discount rate $\gamma = 1$, and termination tolerance $\varepsilon_\text{tol} = 10^{-3}$, with $h(x_t,u_t)$ encoding an input constraint $\|u_t\|_\infty \leq 1$. Tests were implemented in Python with subproblems solved using Gurobi 7.0.2, on a computer with an Intel i7 CPU at 2.60 GHz and 16 GB RAM. 

\subsubsection*{Scalar system}

For ease of visualization, we used the simple system $x_{t+1} = 0.9 \, x_t + u_t$, with $x_t,u_t \in \mathbb{R}$, and ran both algorithm variants. In Variant A, \zalg contained 50 random states $x$ sampled uniformly from the interval $[0,3]$, and a random $u \in [-1, 1]$ associated with each $x$. In Variant B the associated inputs were dropped to form $\xalgm$. Fig.~\ref{fig:conv} shows convergence of the maximum $Q$-Bellman error $\max_{x_m \in \xalgm}\varepsilon(x_m, \pi(x; Q_I); Q_I)$, with the mean error $\tfrac{1}{M}\sum_{m=1}^M\varepsilon(x_m, \pi(x_m; Q_I); Q_I)$ shown for comparison. Total time spent generating lower-bounding functions was 243 ms for Variant A and 121 ms for Variant B. For this simple system the optimal policy can be computed as $\pi^\star(x) = \min\{1,\max\{-0.5377x, -1\}\}$. Fig.~\ref{fig:Qplots} shows the evolution of $Q_I$ at selected iterations; after termination, the policy \eqref{eq:Qpolicy} from Variant B was closer to the optimal policy than that from Variant A. The average closed-loop cost starting from points $x \in \xalgm$ was also lower at 2.47991, compared to 2.48716 for Variant A, and 2.47968 for the optimal policy.

\begin{figure}[tb]
\begin{center}
\includegraphics[width=8.9cm]{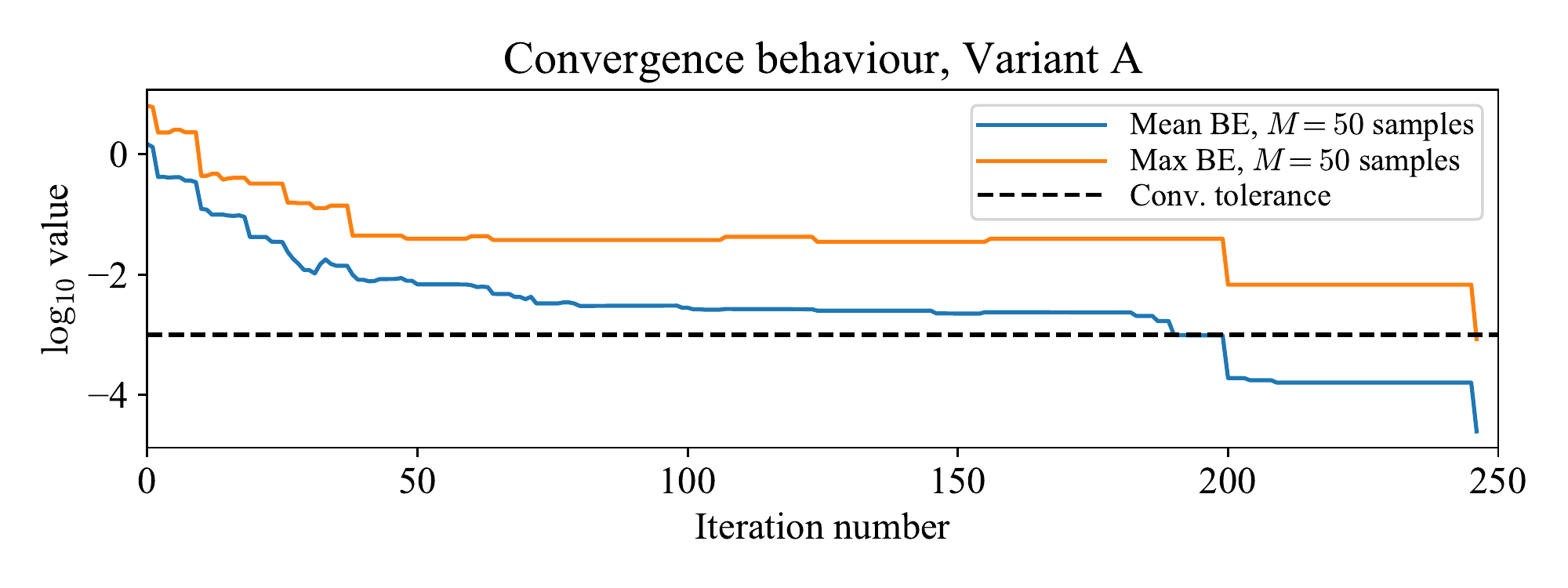}\\[-0.2cm]
\includegraphics[width=8.9cm]{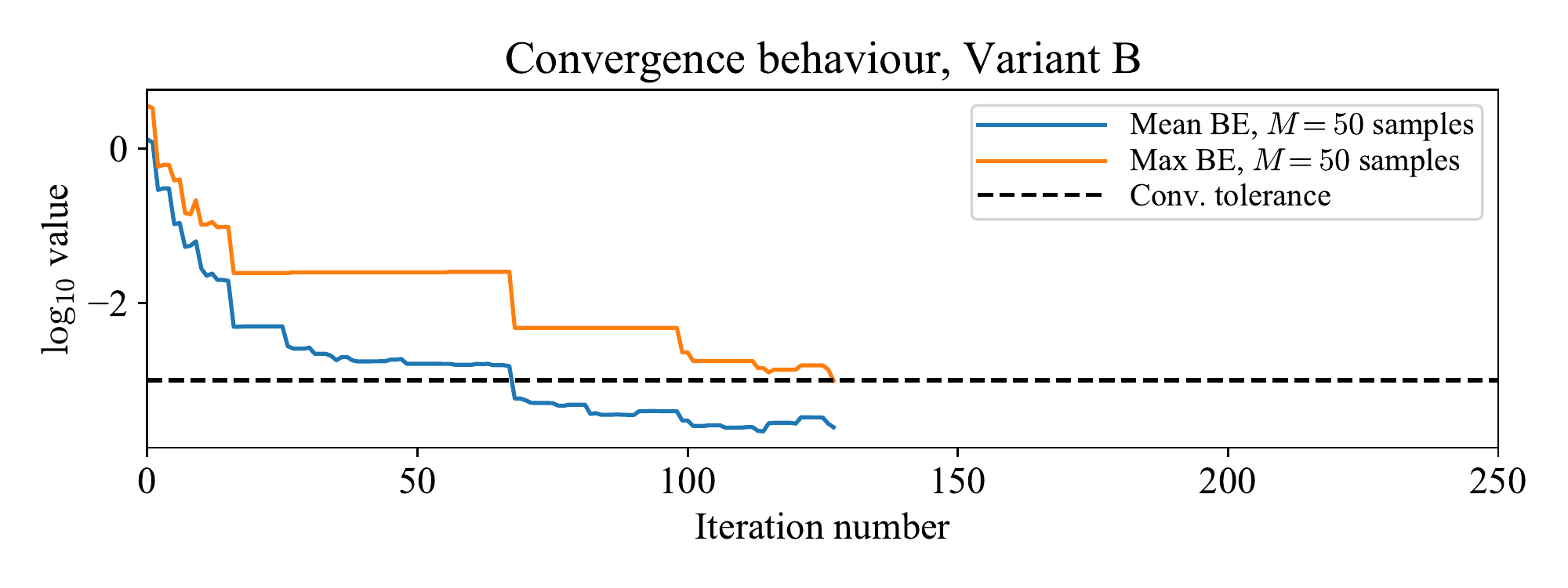}\\
\vspace{-0.7cm}
\end{center}
\caption{Convergence behaviour under Variants A (top) and B (bottom) of Algorithm \ref{alg:QBenders} for a 1-dimensional system. Variant A terminated in 246 iterations, while Variant B terminated in 127 iterations.}
\vspace{-0.4cm}
\label{fig:conv}
\end{figure}

\begin{figure*}[tb]
\begin{center}
\includegraphics[width=0.245\textwidth]{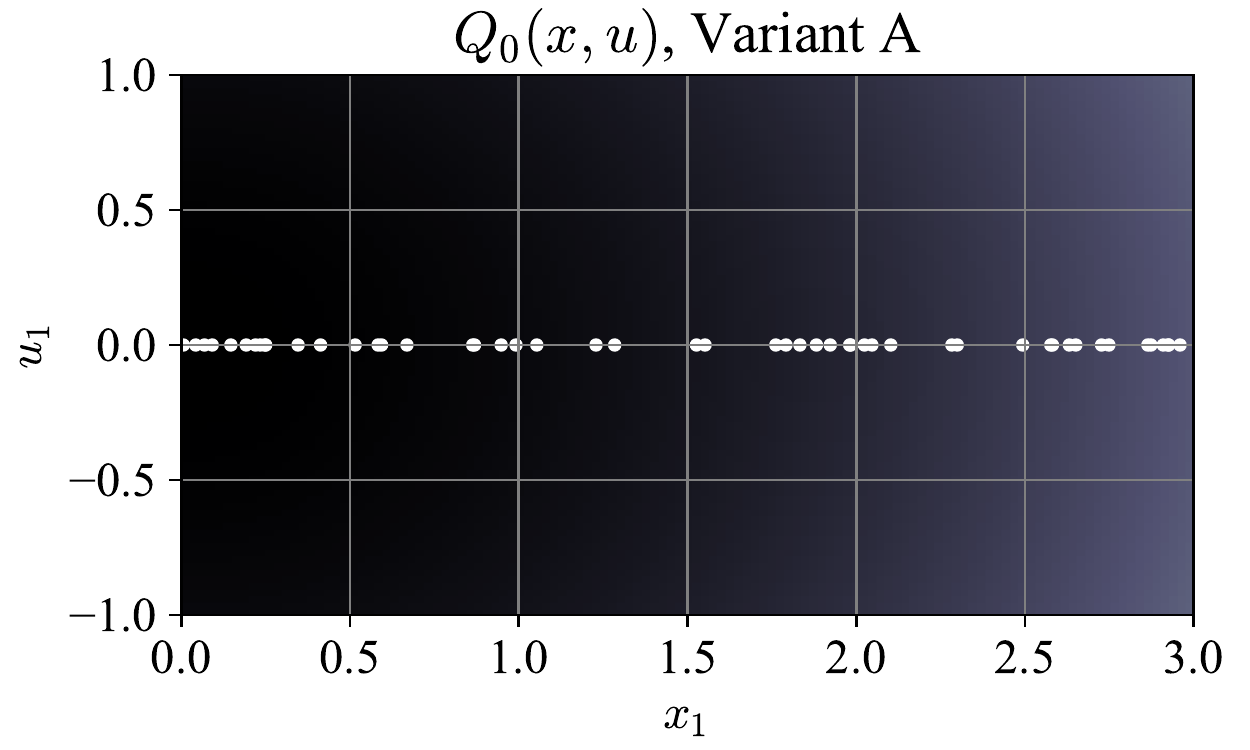}
\includegraphics[width=0.245\textwidth]{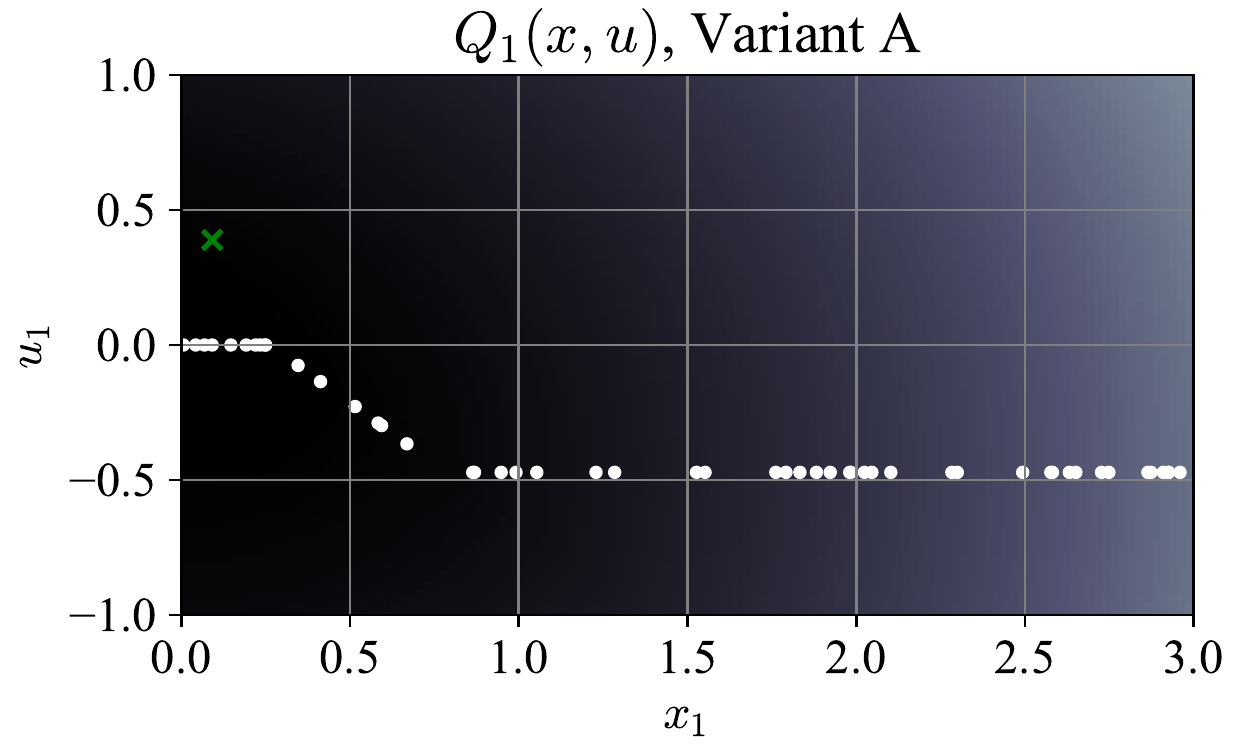}
\includegraphics[width=0.245\textwidth]{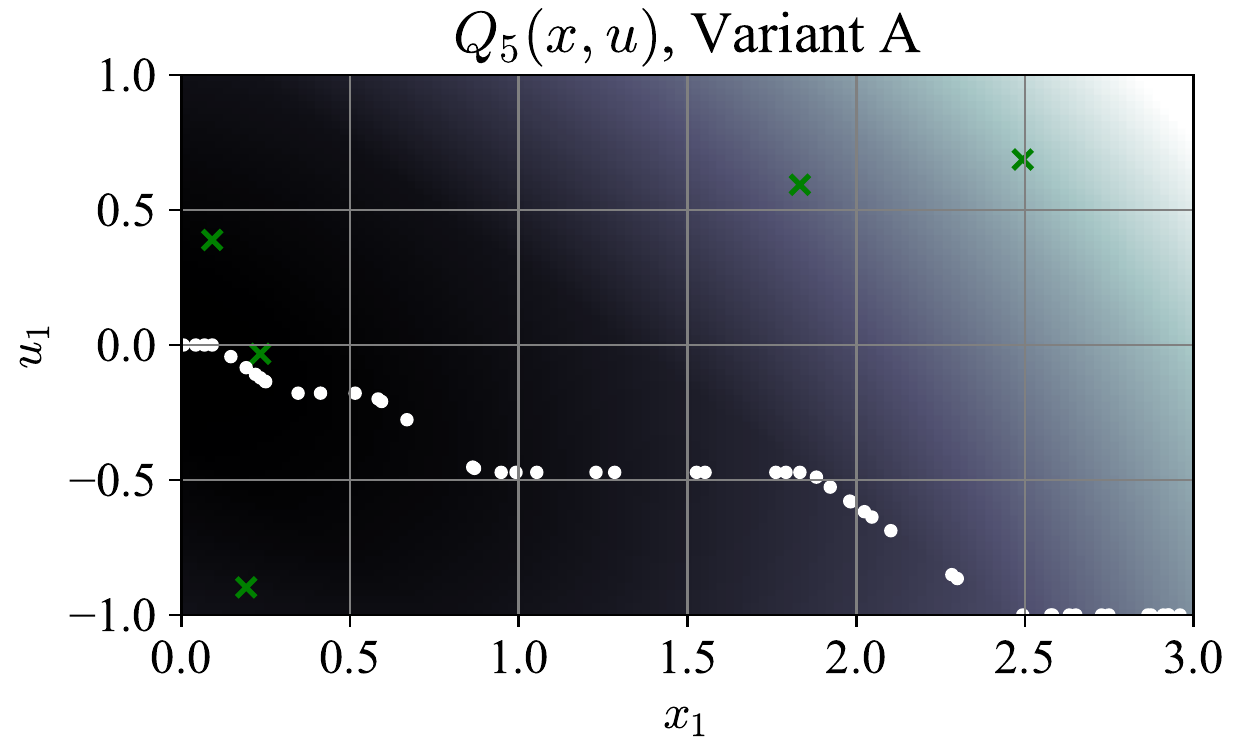}
\includegraphics[width=0.245\textwidth]{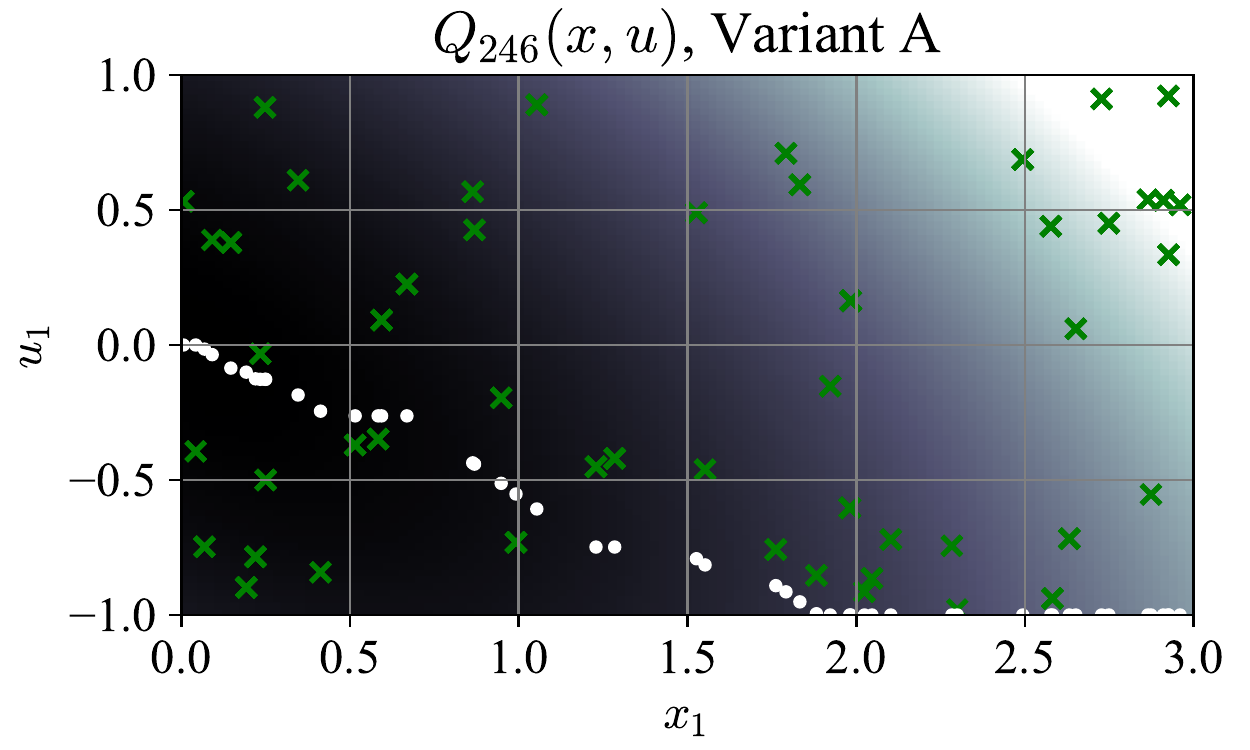}
\vspace{-0.4cm}~\hrule
~\\[0.2cm]
\includegraphics[width=0.245\textwidth]{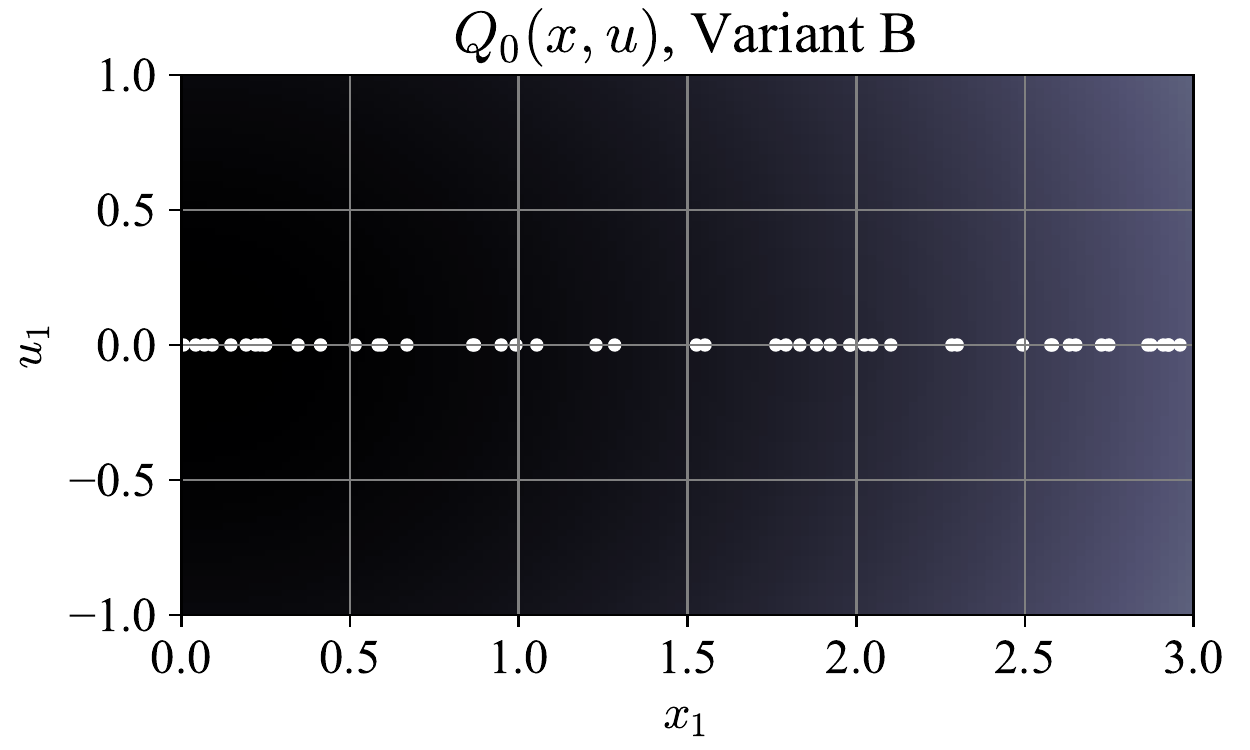}
\includegraphics[width=0.245\textwidth]{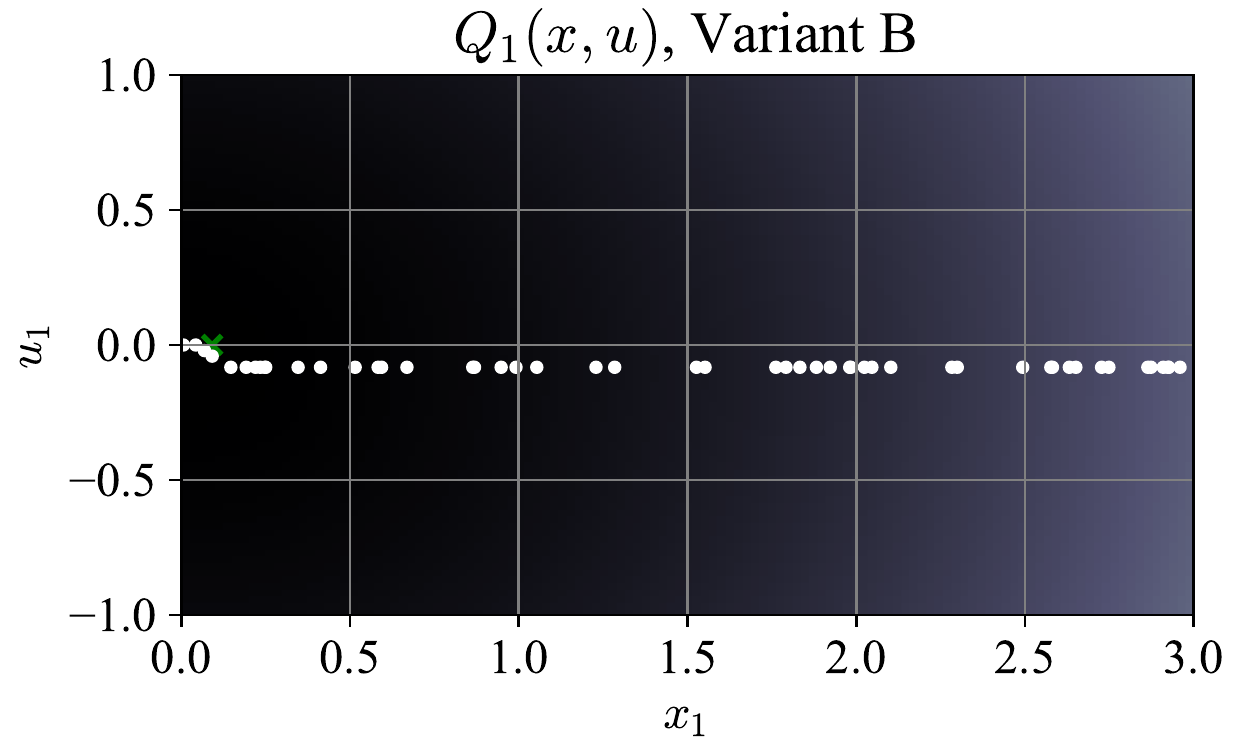}
\includegraphics[width=0.245\textwidth]{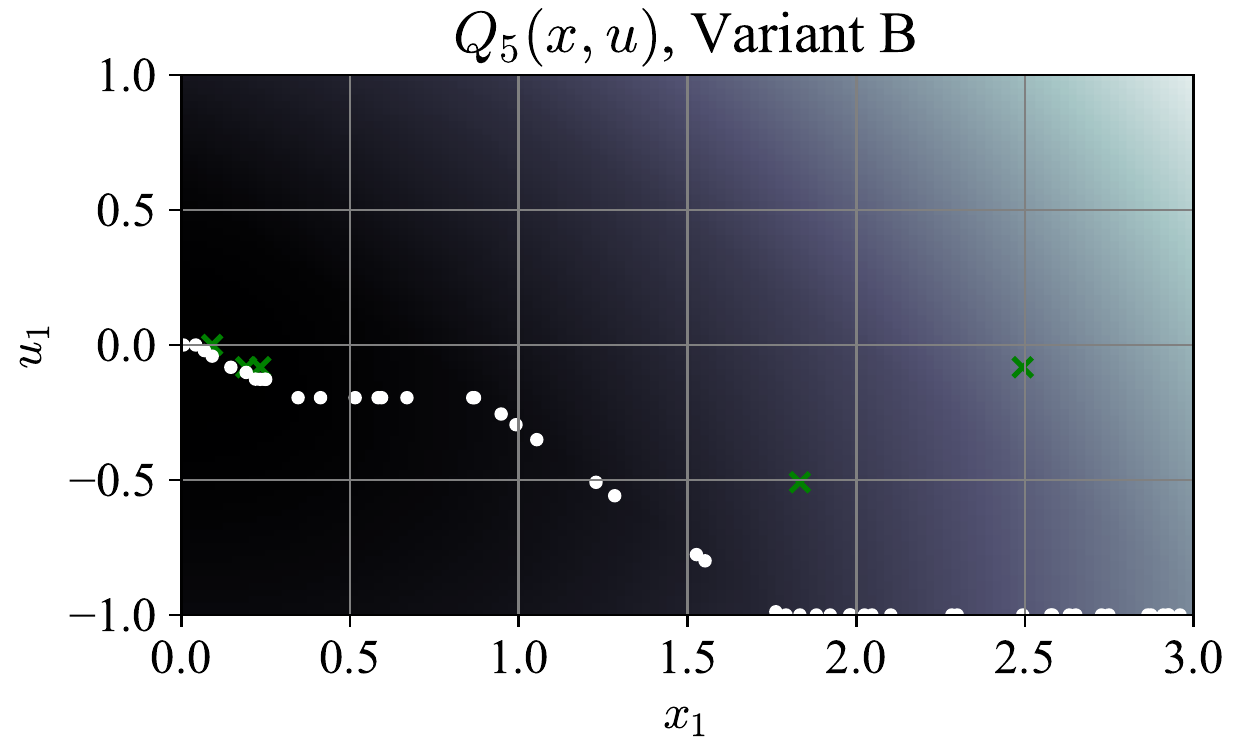}
\includegraphics[width=0.245\textwidth]{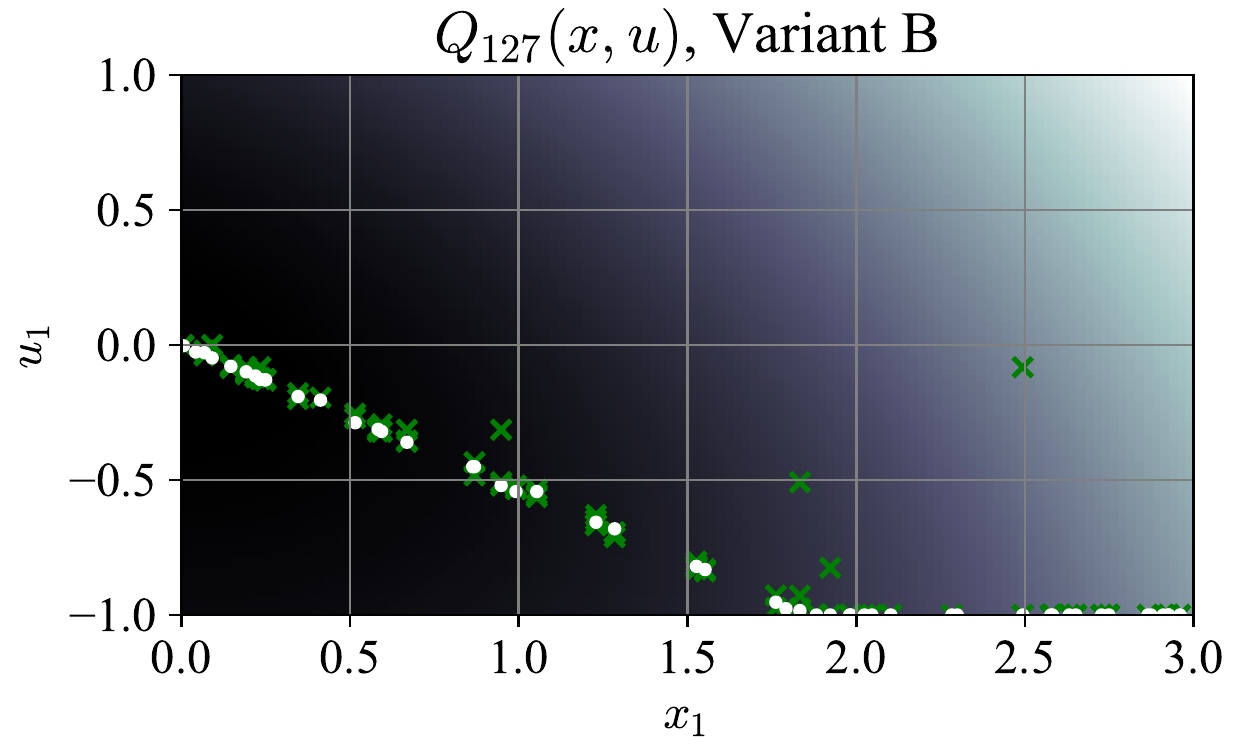}
~\vspace{-0.7cm}
\end{center}
\caption{Evolution of $Q_I(x,u)$ under Variant A (top) and Variant B (bottom) of Algorithm \ref{alg:QBenders}, for a 1-dimensional system. Lighter shading indicates a higher function value. Green markers show points $(x_m, u_m)$ cumulatively visited during iterations. White points show the policy $u = \pi(x_m; Q_I)$ for each $x_m$ in \zalg (Variant A) or \xalg (Variant B). The last plot in each row shows the last iteration, on which the termination criterion was satisfied. The policy returned by Variant B is closer to the optimal piecewise affine policy, and the $(x,u)$ points visited concentrate around this policy in the limit.}
\label{fig:Qplots}
\vspace{-0.2cm}
\end{figure*}

\subsubsection*{Higher-dimensional systems}

We tested Variant B for systems too large for the optimal policy to be computed exactly. 20 random 8-state, 3-input linear systems were created with $\|A\| \leq 0.99$. For each system, $M$ points in \xalg were sampled from a normal distribution with zero mean and variance $25$ times the identity matrix, for $M \in \{10, 20, 50, 100, 200, 500\}$. Table \ref{tab:larger_system} reports statistics upon termination. The number of iterations is roughly linear in $M$, while computation time is roughly quadratic. The latter excludes the Bellman error measurement in line \ref{algl:BellGapEnd}, on the basis that in practice, the convergence check for which it is used need not be carried out at every iteration. 

It is likely that other ways of choosing $m$ in line \ref{algl:Pickx}, e.g.~largest Bellman error, would reduce the number of iterations required, although the assumptions under which finite convergence can be guaranteed may differ. Nevertheless, total times are already modest, and we note that alternative ``exact'' DP approaches such as value iteration \cite[Ch.~2]{busoniu_reinforcement_2010} and explicit MPC \cite{borrelli_predictive_2017} are impractically expensive for problems of this size. 

\begin{table}[tbp]
\caption{Performance statistics (mean $\pm$ standard deviation) for Variant B of Algorithm \ref{alg:QBenders}; 20 random 8-state, 3-input systems.}
\begin{center}
\begin{tabular}{|c|c|c|c|}
\hline
    & Iterations to   & Computation     & Number of  \\
$M$ & termination     & time (s) & cuts added ${}^1$ \\
\hline
\hline
10 & 217.3 $\pm$ 57.9 & 0.313 $\pm$ 0.120 & 160.8 $\pm$ 31.7 \\
20 & 485.7 $\pm$ 74.5 & 1.054 $\pm$ 0.296 & 356.0 $\pm$ 71.0 \\
50 & 1293 $\pm$ 193 & 6.083 $\pm$ 1.724 & 907.0 $\pm$ 163.4 \\
100 & 2736 $\pm$ 491 & 25.09 $\pm$ 8.60 & 1828 $\pm$ 367 \\
200 & 6043 $\pm$ 1017 & 114.7 $\pm$ 34.5 & 3696 $\pm$ 626 \\
500 & 16245 $\pm$ 2527 & 777.1 $\pm$ 215.3 & 9654 $\pm$ 1644 \\
\hline
\end{tabular}
~\\[0.1cm]\footnotesize{${}^1$Cut was not added if $\varepsilon(x_m, \pi(x_m; Q_I); Q_I) < 10^{-5}$ when $x_m$ selected}
\end{center}
\label{tab:larger_system}
\vspace{-0.5cm}
\end{table}%


\section{Conclusion} \label{sec:Conclusion}

This paper presented a general algorithm able to learn $Q$-functions, in the sense of minimizing Bellman error at arbitrary state space locations, for infinite-horizon problems. Convergence results were provided, both for fixed pairs $(x,u)$ and for ``policy-driven'' pairs $(x, \pi(x))$. A further variant of Algorithm \ref{alg:QBenders} could augment \xalg with sequences of states $\{x_\tau \}$ following the policy at each iteration, i.e., $x_{\tau+1} = f(x_\tau, \pi(x_\tau; Q_I))$. This would potentially learn a $Q$-function that approaches $Q^\star$ around entire trajectories, which is stronger than minimizing $\varepsilon(x_m, \pi(x_m; Q_I); Q_I)$ in individual locations $x_m \in \xalgm$.

An added attraction of our formulation is that in many cases problem \eqref{eq:Qpolicy} remains convex even when the $Q$-function is not convex in the state. Future work will investigate such situations, and consider an extension to stochastic systems.

\section*{Acknowledgement}

The author thanks Rahul Jain of the University of Southern California for valuable discussions on the topic of this paper.

\bibliographystyle{abbrvnamed}
\bibliography{warrington}

\begin{appendix}
\section{Examples for Assumption \ref{ass:StrongDuality}}

An example of a class of problems where Assumptions \ref{ass:CompactU} and \ref{ass:StrongDuality} hold is that which we refer to as \textit{\textbf{C-LQR}}, for which:
\begin{itemize}
\item $f(x,u) = Ax + Bu$;
\item $\ell(x,u) = \tfrac{1}{2}x^\top Q x + \tfrac{1}{2} u^\top Ru$, with $Q,R \succeq 0$;
\item $h(x,u) = Dx + Eu - \bar{h}$, defining decoupled state and input constraints, where the latter are compact; 
\item $\gamma ||A|| < 1$, meaning ``discounted-asymptotically'' stable.
\end{itemize}
\begin{prop}
Any problem of class \textbf{C-LQR} satisfies Assumptions \ref{ass:CompactU} and \ref{ass:StrongDuality}.
\end{prop}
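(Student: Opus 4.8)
The plan is to verify each assumption in turn, exploiting the linear dynamics, convex-quadratic cost, and decoupled compact input constraints, reserving the stability condition $\gamma\|A\| < 1$ for the one genuinely delicate step. For Assumption \ref{ass:CompactU}, note that with decoupled constraints the set $\mathcal{U}(x_m) = \{u : Dx_m + Eu \le \bar h\}$ reduces, for fixed $x_m$, to the input-constraint polytope, which is compact by hypothesis and nonempty by Assumption \ref{ass:FiniteQ}. Since $f(x_m,u) = Ax_m + Bu$ is affine in $u$, each of its entries is globally Lipschitz, so this assumption holds immediately.

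For part (i) of Assumption \ref{ass:StrongDuality}, I would first show by induction that every cut is convex quadratic with the same Hessian. Indeed $q_0 = \ell$ has Hessian $\mathrm{diag}(Q,R) \succeq 0$, and by \eqref{eq:newlb_def} each $q_{I+1} = \ell + \hat\nu^{\star\top}(Ax + Bu) + \text{const}$ differs from $\ell$ only by an affine term, hence shares the Hessian $\mathrm{diag}(Q,R)$ and is convex. Consequently \eqref{eq:osp_primal} is a convex program: linear objective, the affine equality \eqref{eq:osp_dyn}, the affine inequality \eqref{eq:osp_sic}, and the convex quadratic epigraph constraints \eqref{eq:osp_epi}. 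Whenever \eqref{eq:osp_primal} is feasible — the only case in which the algorithm uses the dual — a refined Slater point is obtained by taking any point satisfying the affine constraints and choosing $\alpha$ strictly larger than $\max_i q_i(x',u')$; since only the epigraph constraints are nonaffine and these then hold strictly, strong duality with \eqref{eq:osp_dual} follows from standard convex duality.

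Part (ii) is the crux. Differentiating \eqref{eq:newlb_def} gives $\nabla_u q_i(x_m, u) = Ru + B^\top \hat\nu_i^\star$, so all cuts share the Hessian $R$ in $u$, and a common Lipschitz constant on the compact $\mathcal{U}(x_m)$ exists as soon as the dynamics multipliers $\hat\nu_i^\star$ are uniformly bounded over all cuts. By strong duality and sensitivity, $\hat\nu^\star$ equals $\gamma$ times a subgradient of $\underline{Q}_I$ at the successor state $b = f(x_{m'}, u_{m'})$; using the cut structure, such a subgradient takes the form $Qb + A^\top \hat\nu_j^\star$ for an active earlier cut $j$, giving $\|\hat\nu^\star\| \le \gamma\|Q\|\,\|b\| + \gamma\|A\|\,\|\hat\nu_j^\star\|$. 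Since every generating state $x_{m'}$ lies in the finite set \xalg and every input lies in a compact set, $\|b\| \le B_{\max}$ uniformly; writing $M_k$ for the largest multiplier norm among the first $k$ cuts and $C = \gamma\|Q\|B_{\max}$, this yields the recursion $M_{k+1} \le \max\{M_k,\, C + \gamma\|A\| M_k\}$ with $M_0 = 0$, as $q_0$ corresponds to $\nu = 0$. Because $\gamma\|A\| < 1$, the map $M \mapsto C + \gamma\|A\| M$ is a contraction with fixed point $M^\star = \gamma\|Q\|B_{\max}/(1-\gamma\|A\|)$, so $M_k \le M^\star$ for all $k$ by induction. The common Lipschitz constant $L_m = \|R\|\sup_{u \in \mathcal{U}(x_m)}\|u\| + \|B\|M^\star$ then works for every cut, establishing part (ii).

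The main obstacle is precisely this uniform multiplier bound: without $\gamma\|A\| < 1$ the geometric recursion need not contract and successive cuts could steepen without limit. A secondary technical point is that when state constraints are active at $b$, the subgradient of $\underline{Q}_I$ picks up an extra term from their multipliers; I would either argue these remain bounded on the compact feasible region, or, more robustly, bound $\|\hat\nu^\star\|$ directly through the Lipschitz constant of the convex function $\underline{Q}_I$, which is sandwiched between $0$ and $V^\star$ on a neighborhood of the relevant successor states.
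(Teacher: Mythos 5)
Your handling of Assumption \ref{ass:CompactU} and of part (i) of Assumption \ref{ass:StrongDuality} is correct and coincides with the paper's: every cut of the form \eqref{eq:newlb_def} is $\ell$ plus an affine term, hence a convex quadratic with Hessian $\mathrm{diag}(Q,R)$, so \eqref{eq:osp_primal} stays convex, and a Slater point (any feasible $(x',u')$ with $\alpha$ strictly above all $q_i(x',u')$) yields strong duality. For part (ii) your mechanics differ from the paper's while the key idea is the same. The paper works directly with the KKT stationarity conditions of \eqref{eq:osp_primal}, which give $\nu_{I+1} = D^\top \lambda_{c,I+1} + \sum_{i} \lambda_{\alpha,i}(Qx' + A^\top \nu_i)$ together with a companion equation in $u'$; you instead use the sensitivity interpretation $\hat{\nu}^\star \in \gamma\, \partial \underline{Q}_I(b)$ at the successor state $b$ and expand that subgradient over active cuts. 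Both routes then close a geometric recursion on $\max_i \|\nu_i\|$ using stability: your contraction ratio is the cleaner $\gamma\|A\| < 1$, whereas the paper's is $\gamma(\|D\| + \|A\|)$, which it must additionally argue can be pushed below one by exploiting decoupling to rescale the state-constraint rows and shrink $\|D\|$.

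The difference in ratios is not an improvement on your part but the symptom of a genuine gap: your main argument silently assumes no state constraint is active at $b$, i.e.\ that the term $D^\top \lambda_c$ appearing in the paper's recursion vanishes. When rows of the state constraint are active at $b$, the point $b$ lies on the boundary of the effective domain of $\underline{Q}_I$ (outside that polyhedron $\mathcal{U}(b) = \emptyset$ and $\underline{Q}_I = +\infty$), so $\partial \underline{Q}_I(b)$ contains the whole normal cone $\{D^\top\lambda : \lambda \geq 0 \text{ supported on active rows}\}$ and is unbounded; the dual solution the solver returns is some element of this set, not necessarily a small one. Neither of your proposed repairs closes this. The first (``the multipliers remain bounded on the compact feasible region'') is an assertion of exactly what must be proved. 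The second is circular: a convex function sandwiched between $0$ and a finite bound has bounded subgradients only at points \emph{interior} to its domain --- precisely the case in which no extra term arises --- while at boundary points the sandwich gives no control over the subdifferential. The paper confronts this case explicitly: it keeps $D^\top\lambda_c$ in the recursion, bounds $\|\lambda_c\|$ via the $u'$-stationarity equation after normalizing $\|E\| = 1$, and then uses the decoupled structure to make $\|D\|$ small enough that $\gamma(\|D\| + \|A\|) < 1$. To complete your proof you would need to do something equivalent: carry the state-constraint multipliers through your recursion, or restrict/assume successor states interior to the state-constraint set, or argue that minimal-norm dual solutions can be selected and suffice for the cut construction.
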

\begin{proof}
Assumption \ref{ass:CompactU} is satisfied trivially. The lower-bounding functions in constraint \eqref{eq:osp_epi} have the quadratic form 
\begin{equation} \label{eq:CLQRq}
q_i(x,u) = \tfrac{1}{2}x^\top Q x + \tfrac{1}{2} u^\top Ru + \nu_i^\top(Ax+Bu) + \xi_i \, ,
\end{equation}
and thus the problem remains convex at each iteration $I$. A Slater point exists, namely any feasible $(x', u')$ together with any $\alpha > q_i(x',u') \, \forall i$. Thus the strong duality condition in Assumption \ref{ass:StrongDuality} holds.

To prove Lipschitz continuity in Assumption \ref{ass:StrongDuality}, one must bound the gradient in $u$-space of the functions $q_i(x_m, \cdot)$ for any given $x_m$. Inspection of problem \eqref{eq:osp_primal} shows that each new function $q_{I+1}$ depends on the existing functions $q_0, \ldots, q_I$, and \eqref{eq:CLQRq} shows that, due to $u$-compactness, a Lipschitz constant exists if the sequence $\{\|\nu_I\|\}_{I=0}^\infty$ is bounded. The KKT optimality conditions of \eqref{eq:osp_primal} include the stationarity equations
\begin{align*}
\nu_{I+1} & = D^\top \lambda_{c,I+1} + {\smash \sum_{i=0}^I} \lambda_{\alpha,i} (Qx' + A^\top \nu_i) \, ,\\
0 & = E^\top \lambda_{c,I+1} + {\smash \sum_{i=0}^I} \lambda_{\alpha,i} (Ru' + B^\top \nu_i) \, ,\\
\gamma & = {\smash \sum_{i=0}^I \lambda_{\alpha,i}} \, .
\end{align*}

Without loss of generality, one can redefine the system with a linearly scaled input, $B \rightarrow \tilde{B}$ and $R \rightarrow \tilde{R}$, such that $\|\tilde{B}\| = 1$, and then linearly scale the input constraints in $h(x,u)$ such that $\|E\| = 1$. Triangle inequalities then yield
\begin{align*}
\|\nu_{I+1}\| & \leq \|D\| \cdot \|\lambda_{c,I+1}\| + \gamma \|Q\|\cdot \|x'\| \\
& \hspace{3.4cm}+ \|A\| \cdot \left\|{\smash \sum_{i=0}^I} \lambda_{\alpha,i} \nu_i\right\| \, ,\\ 
\|\lambda_{c,I+1}\| & \leq \gamma \|\tilde{R}\| \cdot \|u'\| + \left\|{\smash \sum_{i=0}^I} \lambda_{\alpha,i} \nu_i\right\| \, .
\end{align*}
As $\mathcal{U}(x_m)$ is compact and $x_m$ is fixed, the norms of $x' = Ax_m + Bu$ and $u'$ are both bounded by some constants $X$ and $U$ respectively. Eliminating $\|\lambda_{c,I+1}\|$, one obtains
\begin{align*}
\|\nu_{I+1}\| & \leq \|D\| \left( \gamma U \|\tilde{R}\| + \left\|{\smash \sum_{i=0}^I} \lambda_{\alpha,i} \nu_i\right\|\right )  \\
& \hspace{1.8cm}+ \gamma X \|Q\| + \|A\| \cdot \|{\smash \sum_{i=0}^I} \lambda_{\alpha,i} \nu_i\| \\
& = \gamma(U\|D\|\cdot\|\tilde{R}\| + X\|Q\|) \\
& \hspace{1.8cm}+ (\|D\| + \|A\|)\left\|{\smash \sum_{i=0}^I} \lambda_{\alpha,i} \nu_i\right\| \\
& \leq \gamma(U\|D\|\cdot\|\tilde{R}\| + X\|Q\|) \\
& \hspace{1.8cm}+\gamma (\|D\| + \|A\|)\max_{i=0,...,I}\|\nu_i\| \, .
\end{align*}
Thus, $\{\|\nu_I\|\}_{I=0}^\infty$ can grow no larger than $\frac{\gamma(U\|D\|\cdot\|\tilde{R}\| + X\|Q\|)}{1 - \gamma (\|D\| + \|A\|)}$. As the state and input constraints are decoupled, $\|D\|$ can be made arbitrarily small by scaling the relevant rows of $h(x,u)$. Thus the denominator can be made strictly positive, and functions $q_i$ of the form \eqref{eq:CLQRq} are Lipschitz continuous.
\end{proof}

\end{appendix}

\end{document}